\newcommand{\Per}{\operatorname{Per}}
\newcommand{\pa}{\partial}
\newcommand{\WW}{{\mathcal W}}
\newcommand{\ssl}{{\frak sl}}
\newcommand{\ch}{\operatorname{ch}}
\newcommand{\CH}{\operatorname{CH}}
\renewcommand{\mod}{\operatorname{mod}}
\newcommand{\Cone}{\operatorname{Cone}}
\newcommand{\und}{\underline}
\newcommand{\OO}{{\mathcal O}}
\newcommand{\coker}{\operatorname{coker}}
\newcommand{\DD}{{\mathcal D}}
\newcommand{\NN}{{\mathcal N}}
\newcommand{\hra}{\hookrightarrow}
\newcommand{\lan}{\langle}
\newcommand{\ran}{\rangle}
\newcommand{\Coh}{\operatorname{Coh}}
\newcommand{\GG}{{\mathcal G}}
\newcommand{\Spec}{\operatorname{Spec}}
\newcommand{\Sing}{\operatorname{Sing}}
\newcommand{\Td}{\operatorname{Td}}
\newcommand{\Th}{\Theta}
\newcommand{\si}{\sigma}
\newcommand{\Pic}{\operatorname{Pic}}
\newcommand{\de}{\delta}
\renewcommand{\ker}{\operatorname{ker}}
\numberwithin{equation}{subsection}
\newtheorem{thm}{Theorem}[subsection]
\newtheorem{prop}[thm]{Proposition}
\newtheorem{lem}[thm]{Lemma}
\newtheorem{cor}[thm]{Corollary}
{  \theoremstyle{definition}

\newtheorem{exs}[thm]{Examples}
\newtheorem{rem}[thm]{Remark}
\newtheorem{rems}[thm]{Remarks}
}
\newcommand{\Pf}{\noindent {\it Proof}}
\newcommand{\id}{\operatorname{id}}
\newcommand{\LLie}{{\cal L}ie}
\newcommand{\ov}{\overline}
\newcommand{\we}{\wedge}
\renewcommand{\AA}{{\mathcal A}}
\newcommand{\FF}{{\mathcal F}}
\newcommand{\JJ}{{\mathcal J}}
\newcommand{\Om}{\Omega}
\newcommand{\Hom}{\operatorname{Hom}}
\newcommand{\Ext}{\operatorname{Ext}}
\renewcommand{\a}{\alpha}
\renewcommand{\b}{\beta}
\newcommand{\om}{\omega}
\newcommand{\la}{\lambda}
\renewcommand{\th}{\theta}
\newcommand{\C}{{\Bbb C}}
\newcommand{\Z}{{\Bbb Z}}
\newcommand{\Q}{{\Bbb Q}}
\newcommand{\wt}{\widetilde}
\newcommand{\ot}{\otimes}
\newcommand{\sub}{\subset}
\newcommand{\ed}{\qed\vspace{3mm}}
\newcommand{\Qcoh}{\operatorname{Qcoh}}
\newcommand{\NS}{\operatorname{NS}}
\newcommand{\cl}{\operatorname{cl}}
\title{NC-smooth thickenings and Jacobians}
\author{Alexander Polishchuk}
\address{
    Department of Mathematics, 
    University of Oregon, 
    Eugene, OR 97403, USA; and National Research University Higher School of Economics, Moscow, Russia
  }
 \email{apolish@uoregon.edu}
\thanks{Supported in part by NSF grant DMS-2349388, by the Simons Travel grant MPS-TSM-00002745,
and within the framework of the HSE University Basic Research Program.}
\begin{document}

\begin{abstract}
We prove that a bounded complex of coherent sheaves on an abelian variety $A$, whose Fourier-Mukai transform has support of dimension $\le 1$, 
extends to a perfect complex on the standard NC-smooth thickening of $A$ and on the deformation quantization of any constant Poisson bracket on $A$. 
We discuss a related conjectural characterization of Jacobians in
terms of NC-smooth thickenings.
\end{abstract}

\maketitle

\centerline{\sc Introduction}

NC-smooth thickenings of smooth algebraic varieties were introduced by Kapranov \cite{Kapranov} as a way to study noncommutative geometry
in the ``formal neighborhood" of a commutative variety. The model example of an NC-smooth thickening is a completion of a free algebra in $n$ generators
with respect to the commutator filtration (see Sec.\ \ref{NC-thick-sec} below). 
Kapranov's idea is to consider sheaves of algebras $\OO_X^{NC}$ over smooth varieties that look like this model example in a formal neighborhood of each point (such that by taking the quotient of $\OO_X^{NC}$ by the ideal generated by commutators one recovers the structure sheaf $\OO_X$).
Kapranov proved existence and uniqueness of such NC-smooth thickenings for smooth affine varieties, and constructed some non-affine examples.
Non-affine examples were further studied in \cite{P-Tu} and \cite{DP}. 

Every abelian variety $A$ possesses a natural translation-invariant NC-smooth thickening $A^{NC}$ that we will call {\it standard}. 
In holomorphic terms, this can be obtained by the descent of the standard
NC-smooth thickening of the affine space with respect to translations by a lattice. Over any field of characteristic zero, there exists an algebraic construction using
a canonical flat connection on $A$ (see \cite{P-Tu}). In the case when $A=J$, the Jacobian of a smooth projective curve $C$, there is a way to think of the standard NC-smooth thickening of $J$ as representing a natural noncommutative moduli functor (see \cite[Sec.\ 4]{P-Tu}), and this gives rise to an NC-smooth analog of the Fourier-Mukai transform
that produces sheaves on $J^{NC}$ from sheaves on $C$ (but it is far from being an equivalence).

In this paper we show that the theta divisor $\Th$ on $J$ (defined up to a translation) also extends in some sense to the standard NC-smooth thickening $J^{NC}$. Namely, we prove that there exists a coherent sheaf supported on $\Th$, generically of rank $1$ on $\Th$, which extends to an object of the derived category of $J^{NC}$ represented by a bounded complex of vector bundles 
(see Theorem \ref{Jac-thm}). We conjecture that this property characterizes Jacobians among principally polarized varieties with irreducible theta divisors.


\medskip

\noindent
{\bf Conjecture A}.
{\it Let $(A, \Theta)$ be a principally polarized abelian variety with its theta divisor, where
$\Theta$ is irreducible.
Then it is the Jacobian of a smooth curve if and only if there exists a coherent sheaf $\FF$ on $A$, supported on $\Theta$ and of length $1$ at the general point of $\Theta$,
such that $\FF$ extends to an object of $D^b(A^{NC})$ represented by a bounded complex of vector bundles
(where $A^{NC}$ is the standard  NC-smooth thickening of $A$).
}

\medskip

The ``only if" part is Theorem \ref{Jac-thm}. As an evidence for the ``if" part, we prove that existence of a coherent sheaf $\FF$ as in Conjecture A implies that
$(A,\Theta)$ belongs to the Andreotti-Mayer locus, i.e., that the dimension of the singular locus of $\Theta$ is $\ge g-4$, where $g=\dim A$ (see Cor.\ \ref{ch-2-cor}).
In fact, to deduce this we only need an extension to the standard 1-smooth thickening of $A$ (see Sec.\ \ref{NC-thick-sec}).

As a possible approach to Conjecture A, we suggest the following conjecture giving a criterion of extendability to the standard NC-smooth thickening in terms of
the Fourier-Mukai transform $F:D^b(A)\to D^b(\hat{A})$ (see \cite{Mukai}). 

\medskip

\noindent
{\bf Conjecture B}.
{\it Let $A$ be an abelian variety, $A^{NC}$ its standard NC-smooth thickening. An object $E\in D^b(A)$ extends to a bounded complex of vector bundles on $A^{NC}$
if and only if the Fourier-Mukai transform of $E$ is supported in dimension $\le 1$.
}

\medskip

We prove the ``if" part in Theorem \ref{NC-FM-thm}. 
Note that existence of a resolution by locally free $\OO_{A^{NC}}$-modules of finite rank is much more restrictive than in commutative case (see e.g., Cor.\ \ref{codim-cor}).

Instead of asking which sheaves on a variety $X$ extend to an NC-smooth thickening, one can ask a similar question for deformation quantizations of Poisson structures on $X$.
In the case of constant Poisson structures on abelian varieties and the corresponding Moyal deformation quantizations, we show that there is a relation between these questions.
Namely, we construct a homomorphism from the structure sheaf of standard NC-smooth thickening $\OO_A^{NC}$ to the deformation quantization algebra associated with any constant Poisson bivector (see Proposition \ref{NC-to-quant-prop}). 
Using the result of \cite{BGP} we prove that existence of an extension of an appropriate sheaf on the theta-divisor to the first-order deformation
quantization associated with sufficiently generic constant Poisson bivectors implies that the dimension of the singular locus of $\Theta$ is $\ge \dim A-4$ (see Prop.\ \ref{1st-order-def-qu-prop}).

On the other hand, we show that the ``only if" part of Conjecture B does not hold if we replace the NC-smooth thickening by the corresponding first-order thickening.
Namely, using the results of Toda \cite{Toda} we give an example of a coherent sheaf on an abelian surface which extends to the standard 1-smooth thickening but 
whose Fourier-Mukai transform has full support (see Sec.\ \ref{surface-ex-sec}). 



\bigskip

\noindent
{\it Conventions and notation}. We work over $\C$.
We denote by $\ch^H$ and $c_1^H$ the characteristic classes with values in $H^*(X,\Q)$ (sometimes we also view $c_1^H$ as an element
of $H^1(X,\Om^1_X)$); and by
$\ch$ and $c_1$ the characteristic classes with values in the Chow group. For a smooth variety $X$ we denote by $D^b(X)$ the bounded derived category of coherent
sheaves on $X$.

\section{NC-smooth thickenings and NC-Fourier transform}

\subsection{NC-smooth thickenings}\label{NC-thick-sec}

We refer to \cite{Kapranov} and \cite[Sec.\ 2.1]{P-Tu} for basics on NC-smooth thickenings.

The commutator filtration $F^dR$ on a ring $R$ is defined by letting $F^dR$ be a two-sided ideal generated by all expressions containing $d$-commutators
(possibly nested). The ring is called NC-nilpotent if $F^dR=0$ for some $d$, and NC-complete if the natural map $R\to\varprojlim R/F^dR$ is an isomorphism.
NC-smoothness for NC-complete algebras is defined similarly to formal smoothness, in terms of nilpotent extensions of NC-smooth algebras.

These notions can be globalized, so one can consider sheaves of NC-smooth algebras. For a smooth variety $X$, an NC-smooth thickening of $X$ is defined
as a sheaf of NC-smooth algebras $\OO_X^{NC}$ together with an isomorphism $\OO_X^{NC}/F^1\simeq \OO_X$. 

One can also consider $n$-smooth thickenings by considering only NC-algebras (or sheaves) with $F^{n+1}=0$. For example, a $1$-smooth thickening of a smooth variety
$X$ is a central square zero extension of sheaves of algebras 
$$0\to \NN\to \wt{\OO}_X\to \OO_X\to 0$$
such that the commutator pairing $f,g\mapsto [\wt{f},\wt{g}]$, where $\wt{f}$ and $\wt{g}$ are liftings of $f,g\in \OO_X$ to $\wt{\OO}_X$, induces an isomorphism
$\Om_X^2\rTo{\sim} \NN$. In particular, for every $X$ there is a {\it standard} 1-smooth thickening, 
$$\OO_X^{1-NC}=\Om_X^2\oplus \OO_X,$$
equipped with the product $(\a,f)\cdot (\b,g)=(g\a+f\b+dg\we dg,fg)$
(see \cite[(1.3.9)]{Kapranov}).

In \cite{P-Tu} we developed a construction of an NC-smooth thickening associated with a torsion free connection on the tangent bundle $T_X$ (in the case of a flat connection it is recalled in Sec.\ \ref{dg-constr-sec} below).
Applying this construction to the natural flat connection on an abelian variety $A$ we get the {\it standard} NC-smooth thickening $A^{NC}$
(more generally, this works for commutative algebraic groups, see \cite[Def.\ 2.3.19]{P-Tu}). The induced 1-smooth thickening $\OO_A^{NC}/F^2$ is the standard
1-smooth thickening of $A$ discussed above (by \cite[Prop.\ 2.3.18]{P-Tu}).

\subsection{NC-Fourier transform}

Let $A$ be an abelian variety, $\hat{A}$ the dual abelian variety.
It was observed in \cite{P-Tu} that the restriction of Fourier-Mukai transform $F:D^b(\hat{A})\to D^b(A)$ to $D^b(C)$, for a curve $C\sub \hat{A}$,
extends to an NC-Fourier transform $F^{NC}:D^b(C)\to \Per(A^{NC})$, where $A^{NC}$ is the standard NC-smooth thickening of $A$, and $\Per(A^{NC})$
denotes the full subcategory in the derived category of $\OO_{A^{NC}}$-modules formed by bounded complexes of vector bundles (i.e., locally free left modules of finite rank).

In the next theorem we use a slight extension of this construction to prove the ``if" part of Conjecture B.





\begin{thm}\label{NC-FM-thm} Let $A$ be an abelian variety, $A^{NC}$ its standard thickening.
If the Fourier-Mukai transform of $E\in D^b(A)$ has support of dimension $\le 1$ then
$E$ extends to a perfect object in $D^b(A^{NC})$. Furthermore, for any closed subset $Z\sub \hat{A}$ of dimension $\le 1$, 
there exists a functor 
$$D^b_Z(\hat{A})\to \Per(A^{NC}),$$
where $D^b_Z(\hat{A})\sub D^b(\hat{A})$ is the full subcategory of objects with support on $Z$, 
extending the Fourier-Mukai transform $D^b_Z(\hat{A})\to D^b(A)$.
\end{thm}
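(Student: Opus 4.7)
The plan is to construct the functor $F^{NC}\colon D^b_Z(\hat{A})\to \Per(A^{NC})$ by dévissage, reducing to the case of a smooth curve that is already handled in \cite[Sec.\ 4]{P-Tu}; the first assertion of the theorem then follows by applying the constructed functor to $\hat{E}=F^{-1}(E)$, which lies in $D^b_Z(\hat{A})$ for $Z=\supp \hat{E}$.

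I would first reduce to coherent sheaves with pure support: since $\Per(A^{NC})$ is triangulated inside the derived category of $\OO_{A^{NC}}$-modules and $D^b_Z(\hat{A})$ is generated under cones and shifts by coherent sheaves supported on $Z$, it suffices to construct $F^{NC}$ on such sheaves, and the filtration by codimension of support further reduces to sheaves of pure $0$- or $1$-dimensional support. A pure $0$-dimensional sheaf is an iterated extension of skyscrapers $\OO_\alpha$ for $\alpha\in \hat{A}$; its Fourier-Mukai image is the topologically trivial line bundle $L_\alpha$ with its canonical flat connection, which lifts to a rank-one locally free $\OO_{A^{NC}}$-module by the construction of \cite{P-Tu} recalled in Sec.\ \ref{dg-constr-sec}.

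For pure $1$-dimensional support, I would choose the normalization $\ti{C}\to Z_{\text{red},1}$ of the $1$-dimensional components of $Z_{\text{red}}$, composed with the inclusion into $\hat{A}$ to give a finite morphism $\pi\colon \ti{C}\to \hat{A}$ from a smooth (possibly disconnected) curve. A standard dévissage — filtering by powers of the ideal cutting out the reduced $1$-dimensional support and using the conductor sequence for $\pi$ — expresses any such sheaf as an iterated extension of sheaves of the form $\pi_*\GG$ for $\GG\in \Coh(\ti{C})$, together with $0$-dimensional pieces already treated. To each $\GG$ the NC-Fourier functor of \cite[Sec.\ 4]{P-Tu} (applied componentwise on $\ti{C}$) assigns an object of $\Per(A^{NC})$ lifting $F(\pi_*\GG)$, and the full $F^{NC}(\FF)$ is then assembled as the corresponding iterated cone in $\Per(A^{NC})$.

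The main obstacle is functoriality and independence of the choices of resolution and dévissage. The cleanest route is probably to realize $F^{NC}$ via a single formula $F^{NC}(\hat{E})=R\pi_{A,*}\bigl(\pi_{\hat{A}}^*\hat{E}\otimes \PP^{NC}\bigr)$ for a noncommutative Poincaré bimodule $\PP^{NC}$ on $A^{NC}\times \hat{A}$ (a global version of the one used on $A^{NC}\times C$ in \cite{P-Tu}), and to check that the hypothesis $\dim \supp \hat{E}\le 1$ is exactly what forces the pushforward to land in $\Per(A^{NC})$ — a restriction reflecting the codimension phenomenon of Cor.\ \ref{codim-cor}, and the reason the analogous statement must fail for supports of higher dimension.
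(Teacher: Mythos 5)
Your dévissage sketch identifies the right ingredients (the NC-Fourier transform from \cite[Sec.\ 4]{P-Tu} for smooth curves, skyscrapers, normalizations), but both of your proposed routes to an actual \emph{functor} $D^b_Z(\hat A)\to\Per(A^{NC})$ have a genuine gap, and you locate the role of the hypothesis $\dim Z\le 1$ in the wrong place.

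The ``cleanest route'' you recommend at the end — a single noncommutative Poincar\'e bimodule $\PP^{NC}$ on $A^{NC}\times\hat A$ — is not available. Extending the Poincar\'e bundle over $A^{NC}$ in the second factor requires choosing a lift of (an open of) $\hat A$ to the universal extension $A^\natural\to\hat A$, which is an $H^0(A,\Om^1)$-torsor admitting sections only over affine opens; there is no global lift, hence no global kernel. This is exactly where $\dim Z\le 1$ is used in the paper's construction: one chooses two affine opens $U_1,U_2\sub\hat A$ with $Z\sub U:=U_1\cup U_2$ (possible precisely because $Z$ is at most a curve), lifts each $U_i$ to $A^\natural$ to get extensions of the Poincar\'e bundle over $U_i\times A^{NC}$, and checks these agree on $(U_1\cap U_2)\times A^{NC}$, producing a kernel on $U\times A^{NC}$. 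Since $D^b_Z(\hat A)=D^b_Z(U)$, this kernel defines the functor at once, with no dévissage needed for the construction. The dévissage you describe is used in the paper only afterwards, and only to verify the \emph{property} that the output is perfect — there it is harmless to reduce to skyscrapers and pushforwards from normalizations of the components of $Z$, since one is checking a property of a single object rather than assembling a functor. Your first route, which tries to \emph{build} $F^{NC}$ by iterated cones from its values on building blocks, does not define a functor (cone decompositions are non-canonical, and there is no way to make the assignment functorial), and you acknowledge this without resolving it; the resolution is the kernel on $U\times A^{NC}$, which you did not construct.
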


\Pf . 
Let $Z\subset \hat{A}$ be a closed subset 
of dimension $1$. We can find two affine open subsets $U_1,U_2\sub\hat{A}$
such that $Z\sub U:=U_1\cup U_2$.
We have $D^b_Z(\hat{A})=D^b_Z(U)$, so it is enough
to construct an integral transform $D^b_Z(U)\to \Per(A^{NC})$ 
extending the Fourier transform $D^b_Z(U)\to D^b(A)$.
As in \cite[Sec.\ 4.3]{P-Tu}, for this we construct an extension of the Poincare line bundle to $U\times A^{NC}$.

Let $A^\natural\to \hat{A}$ denote the $H^0(A,\Om^1)$-torsor parametrizing line bundles with connections on $A$.
Since $U_i$ are affine, we can choose liftings of $U_1$ and $U_2$ to $A^\natural$. 
As is explained in \cite[Sec.\ 4.3]{P-Tu}, using these liftings we get extensions of the Poincar\'e bundle over $U_i\times A$ to $U_i\times A^{NC}$ and
an isomorphism between the induced extensions on $(U_1\cap U_2)\times A^{NC}$.
Hence, we obtain a line bundle on $U\times A^{NC}$ that we can use as a kernel of an integral
transform 
$$F_U:D^b(\Qcoh(U))\to D(\OO^{NC}_A-\mod)$$ 
which lifts the corresponding integral transform from $D^b(\Qcoh(U)$ to $D(\Qcoh A)$.
It remains to prove that $F_U(\FF)$ is a perfect complex for any $\FF$ supported on $Z$.
It is enough to prove this for skyscraper sheaves and for coherent sheaves obtained as push-forwards
from normalizations of irreducible components of $Z$. But this follows from \cite[Thm.\ 4.3.6]{P-Tu}.
\ed



One indirect evidence if favor of Conjecture B is that both conditions imply that $\ch^H_{\ge 2}(E)=0$: 
for the condition that $F(E)$ is supported in dimension $\le 1$ this follows from the formula for the action of the Fourier-Mukai transform on cohomology
(see \cite[Cor.\ 1.18]{Mukai-2}). For the condition that $E$ extends to a perfect complex on $A^{NS}$ this is a consequence of the following result.

\begin{prop}\label{ch-prop}
Let $X$ be a smooth projective variety.
If an object $E\in D^b(X)$ extends to a bounded complex of vector bundles on the standard 1-smooth thickening of $X$ then $\ch^H_{\ge 2}(E)=0$.
\end{prop}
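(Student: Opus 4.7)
The plan is to reduce to the case of a single vector bundle, identify the obstruction to lifting such a bundle to $X^{1-NC}$ with the Yoneda square of the Atiyah class, and then invoke the formula $\ch^H_k(E) = \frac{1}{k!}\Tr(A(E)^k)$ to conclude.

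For the reduction, let $\tilde{E}^\bullet$ be a bounded complex of vector bundles on $X^{1-NC}$ whose reduction modulo $F^1\OO^{1-NC}_X = \Om_X^2$ represents $E\in D^b(X)$. Each term $E^k := \tilde{E}^k/F^1\tilde{E}^k$ is a vector bundle on $X$, so additivity of $\ch^H$ gives $\ch^H(E) = \sum_k(-1)^k\ch^H(E^k)$ and it suffices to prove the vanishing when $E$ itself is a vector bundle on $X$ lifting to some vector bundle $\tilde{E}$ on $X^{1-NC}$.

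Pick a trivializing cover $\{U_\alpha\}$ for $\tilde{E}$, with transition matrices $\tilde{g}_{\alpha\beta} = g_{\alpha\beta}+\zeta_{\alpha\beta}\in\GL_r(\OO^{1-NC}_X)$, $g_{\alpha\beta}\in\GL_r(\OO_X)$, $\zeta_{\alpha\beta}\in\Mat_r(\Om_X^2)$. Using the product in $\OO^{1-NC}_X$ together with the identity $\Om_X^2\cdot\Om_X^2=0$, the cocycle identity $\tilde{g}_{\alpha\beta}\tilde{g}_{\beta\gamma}=\tilde{g}_{\alpha\gamma}$ unfolds into
$$\zeta_{\alpha\gamma} - g_{\alpha\beta}\zeta_{\beta\gamma} - \zeta_{\alpha\beta}g_{\beta\gamma} \;=\; dg_{\alpha\beta}\wedge dg_{\beta\gamma}\ \in\ \Mat_r(\Om_X^2)(U_{\alpha\beta\gamma}).$$
After conjugating by the transition matrices, the left-hand side is the Čech coboundary of the $\End(E)\otimes\Om_X^2$-valued $1$-cochain $\omega_{\alpha\beta} := \zeta_{\alpha\beta}g_{\alpha\beta}^{-1}$, while the right-hand side is the standard Čech representative of the Yoneda product $A(E)\cdot A(E)\in\Ext^2(E,E\otimes\Om_X^2)$, where $A(E)\in\Ext^1(E,E\otimes\Om_X^1)$ is the Atiyah class represented by $(dg_{\alpha\beta}\cdot g_{\alpha\beta}^{-1})$ and the product is induced by the wedge $\Om_X^1\otimes\Om_X^1\to\Om_X^2$. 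The equation therefore exhibits $A(E)^2$ as a coboundary, i.e., $A(E)^2=0$ in $\Ext^2(E,E\otimes\Om_X^2)$.

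By associativity of the Yoneda product, $A(E)^k = A(E)^{k-2}\cdot A(E)^2 = 0$ in $\Ext^k(E,E\otimes\Om_X^k)$ for every $k\ge 2$, and the Atiyah--Hirzebruch formula $\ch^H_k(E) = \frac{1}{k!}\Tr(A(E)^k)$ yields $\ch^H_k(E)=0$ for all $k\ge 2$. The only delicate step is identifying the Čech $2$-cocycle $(dg_{\alpha\beta}\wedge dg_{\beta\gamma})$ with the Yoneda square of the Atiyah class; this is a standard but bookkeeping-heavy Čech computation, the subtlety being to track the conjugations by the $g_{\alpha\beta}$ that convert the raw expression above into the standard representative of $A(E)\cdot A(E)$.
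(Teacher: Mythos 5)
Your proof is correct, and it is a self-contained version of exactly what the paper invokes: the paper's proof is the single line ``this follows immediately from \cite[Cor.\ 3.3.5]{P-Tu},'' and your Čech computation (unfolding the cocycle condition in $\GL_r(\OO_X^{1-NC})$ to show that the wedge square of the Atiyah class is a coboundary, then invoking $\ch^H_k(E)=\frac{1}{k!}\Tr(A(E)^k)$) is precisely the content of that citation. The reduction to a single bundle via additivity of $\ch^H$, the identification of $a\cup a$ with $-\delta\omega$ where $\omega_{\alpha\beta}=\zeta_{\alpha\beta}g_{\alpha\beta}^{-1}$, and the conclusion $A(E)^k=A(E)^{k-2}\cdot A(E)^2=0$ for $k\ge 2$ are all sound.
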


\begin{proof} This follows immediately from \cite[Cor.\ 3.3.5]{P-Tu}.
\end{proof}

\begin{cor}\label{codim-cor} 
Let $\FF$ be a nonzero coherent sheaf on a smooth connected projective variety $X$.

\noindent 
(i) If $\FF$ has support of codimension $\ge 2$ then $\FF$ does not extend to a perfect complex
on the standard 1-smooth thickening of $X$. 

\noindent
(ii) Assume $\FF=i_*\GG$, where $i:D\to X$ is the embedding of a smooth connected divisor, and $\GG$ is a coherent
sheaf of rank $r$ on $D$. If $\FF$ extends to a perfect complex on the standard 1-smooth thickening of $X$ then one has
$$i_*(c^H_1(\GG)-r\cdot c_1^H(\OO(D)|_D)/2)=0$$
in $H^4(X,\Q)$.
\end{cor}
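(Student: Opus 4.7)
The plan is to derive both parts directly from Proposition \ref{ch-prop}, which asserts that any object extending to a perfect complex on the standard $1$-smooth thickening of $X$ must satisfy $\ch^H_{\ge 2}=0$.

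For (i), I would argue by contradiction. Assume $\FF$ is nonzero with support of codimension $\ge 2$, and let $c\ge 2$ be the minimum codimension of an irreducible component of $\supp\FF$. A standard dévissage (composition series / induction on length) identifies the lowest possibly nonvanishing piece of the Chern character as
$$\ch^H_c(\FF)=\sum_Y \ell_Y\,[Y]\in H^{2c}(X,\Q),$$
where the sum ranges over codimension-$c$ components $Y$ of $\supp\FF$ and $\ell_Y>0$ is the generic length of $\FF$ along $Y$. To confirm this class is nonzero, I would pair against $h^{\dim X-c}$ for an ample class $h\in H^2(X,\Q)$, obtaining the strictly positive integer $\sum_Y \ell_Y\deg_h(Y)$. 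This contradicts the vanishing $\ch^H_c(\FF)=0$ forced by Proposition \ref{ch-prop}.

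For (ii), the natural tool is Grothendieck--Riemann--Roch applied to the closed embedding $i:D\hookrightarrow X$. Since the normal bundle is $N=\OO(D)|_D$, GRR and the projection formula give
$$\ch^H(i_*\GG)=i_*\!\left(\ch^H(\GG)\cdot\Td(N)^{-1}\right).$$
Expanding via $1/\Td(t)=1-t/2+t^2/6-\cdots$ and isolating the piece of total cohomological degree $4$ in $X$, one reads off
$$\ch^H_2(i_*\GG)=i_*\!\left(c_1^H(\GG)-r\cdot c_1^H(\OO(D)|_D)/2\right).$$
Proposition \ref{ch-prop} forces this class to vanish in $H^4(X,\Q)$, which is precisely the asserted identity.

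If any step deserves the label ``obstacle,'' it is in part (i), where one must be honest about the leading term of the cohomological Chern character coinciding with the weighted fundamental class of the support. The Chow-theoretic form of this identity is a standard consequence of Riemann--Roch for regular embeddings, and its cohomological image retains enough positivity on a smooth projective variety to be detected by pairing with an ample class, as above. Part (ii) is then pure bookkeeping: GRR together with Proposition \ref{ch-prop}.
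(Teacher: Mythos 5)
Your proof is correct and follows essentially the same route as the paper: part (i) identifies the leading term of $\ch^H(\FF)$ with the (positively weighted) fundamental class of the support and detects its nonvanishing by pairing with powers of an ample class, then invokes Proposition \ref{ch-prop}; part (ii) is the same GRR computation. The only cosmetic difference is that you phrase the codimension via the minimum over components while the paper uses $n-m$ with $m=\dim\supp\FF$, which is the same quantity.
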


\begin{proof}
(i) Let $n=\dim X$, and let $m$ be the dimension of the support of $X$. Then $\ch(\FF)_{n-m}=\sum_i a_i [Z_i]\in \CH^{n-m}(X)_{\Q}$ for some 
irreducible subvarieties $Z_i$ of dimension $m$ and some positive coefficients $a_i$, and $\ch(\FF)_{<n-m}=0$. It follows that if $h$ is an ample divisor class
then $\ch^H(\FF)_{n-m}\cdot h^m\neq 0$, so $\ch^H(\FF)_{n-m}\neq 0$. Thus, if $m\le n-2$ we cannot have $\ch^H_{\ge 2}(\FF)=0$. So, by Proposition
\ref{ch-prop}, $\FF$ cannot extend to the standard 1-smooth thickening of $X$.

\noindent
(ii) This follows immediately by applying the Grothendieck-Riemann-Roch theorem
to rewrite the condition $\ch^H_2(i_*\GG)=0$.
\end{proof}

\begin{rems} 1. Note that both conditions in Conjecture B are preserved by pull-backs under homomorphisms between abelian varieties (as well as by translations) and by tensoring with line bundles in $\Pic^0(A)$. This follows from the standard
compatibility of the Fourier-Mukai transform with these operations and from the fact that they extend to the standard NC-smooth smooth thickenings
(see \cite[Prop.\ 2.3.20]{P-Tu} and  \cite[Thm.\ 3.3.8(ii)]{P-Tu}).


\noindent
2. The results of \cite[Sec.\ 3.2]{P-Tu} suggest that a plausible analog of Conjecture B for $\OO_A^{NC}$-bimodules is that an object $E\in D^b(A)$ extends to a complex of $\OO_A^{NC}$-bimodules, 
locally free of finite rank as left or right modules, if and only the Fourier-Mukai transform of $E$ has $0$-dimensional support.
\end{rems}

\subsection{Extensions of the theta-divisor to the NC-thickening}


Using Theorem \ref{NC-FM-thm} we get a proof of the ``only if" direction of Conjecture A.

\begin{thm}\label{Jac-thm}
Let $(J,\Th)$ be the Jacobian of a smooth projective curve $C$, with its theta divisor. Then there exists a line bundle $L$ on the smooth part $\Th^{ns}$ of $\Th$,
such that the coherent sheaf $\FF:=j^{ns}_*L$ on $\Th$, where $j^{ns}:\Th^{ns}\to J$ is the natural embedding, 
extends to a perfect complex on $J^{NC}$, the standard NC-thickening of $J$.
\end{thm}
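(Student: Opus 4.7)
By Theorem \ref{NC-FM-thm} it is enough to construct $\FF = j^{ns}_*L$ on $J$ such that $F(\FF) \in D^b(\hat J)$ has support of dimension $\le 1$. The plan is to produce $\FF$ as a shift of the inverse Fourier-Mukai transform of a line bundle pushed forward from the Abel-Jacobi image of $C$.

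Fix an Abel-Jacobi embedding $i:C\hookrightarrow \hat J$ and choose a line bundle $M$ on $C$ of degree $g-1$ with $h^0(M)=0$. Set
\[
\GG:=R{p_J}_*\bigl(\PP\otimes p_{\hat J}^*(i_*M)\bigr)\in D^b(J),
\]
the inverse Fourier-Mukai transform of $i_*M$; equivalently, $\GG=R{p_2}_*\LL^M$, where $\LL^M$ is the universal degree-$(g-1)$ line bundle on $C\times J$. The stalk of $\GG$ at $y\in J\cong\Pic^0(C)$ computes $R\Gamma(C,M\otimes L_y)$, which has Euler characteristic $0$ and is nonzero precisely on a translate $\tau$ of $\Theta\subset J$, generically of rank one there.

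The main step is to show that $\GG$ is a single sheaf up to shift, of the required form. First I would represent $\GG$ by a two-term complex $[K^0\xrightarrow{f}K^1]$ of locally free sheaves of equal rank on $J$. The sheaf $\HH^0(\GG)=\ker f$ equals the naive pushforward $p_{J*}\LL^M$, which vanishes: any section of $\LL^M$ on $C\times U$ for a non-empty open $U\subset J$ must restrict trivially to each fiber $C\times\{y\}$ with $y\in U\setminus\tau$ (where $h^0(M\otimes L_y)=0$), hence is identically zero. So $f$ is $\OO_J$-injective, $\det f$ scheme-theoretically cuts out $\tau$, and $\GG\simeq\FF[-1]$ with $\FF:=\coker f$ supported on $\tau$ and generically of rank one. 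The short exact sequence $0\to K^0\to K^1\to\FF\to 0$ and the standard depth inequality give $\operatorname{depth}_{\OO_J}\FF\ge g-1$ at every closed point, which forces $\FF$ to be Cohen-Macaulay on its $(g-1)$-dimensional support $\tau$. By Riemann's singularity theorem $\Sing(\Theta)=W^1_{g-1}$, and the Martens bound $\dim W^1_{g-1}\le g-3$ (valid for any smooth curve) places $\Sing(\Theta)$ in codimension $\ge 2$ in $\Theta$; together with $\Theta$ being $S_2$ as a Cartier divisor in smooth $J$, this makes $\tau$ normal. A rank-one Cohen-Macaulay (hence reflexive) sheaf on a normal variety equals the pushforward from its smooth locus of its restriction, so $\FF=j^{ns}_*L$ with $L:=\FF|_{\tau^{ns}}$; by translation invariance of the standard NC-thickening we may assume $\tau=\Theta$.

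The conclusion then follows by Mukai's inversion: $F(\GG)=(-1)^*(i_*M)[-g]$, so $F(\FF)=F(\GG)[1]=(-1)^*(i_*M)[1-g]$, which is supported on $-i(C)\subset\hat J$, of dimension $1$. Applying Theorem \ref{NC-FM-thm} produces the required extension of $\FF$ to a perfect complex on $J^{NC}$. The hardest step, and the one requiring care, will be the identification of $\coker f$ with $j^{ns}_*L$ (rather than with some sheaf carrying extra torsion along $\Sing(\Theta)$): this rests on the Cohen-Macaulay property of $\coker f$ from the depth inequality, combined with the normality of $\Theta$ from the Martens bound.
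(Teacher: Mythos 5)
Your argument is correct and follows the same strategy as the paper: realize $\FF$ as a shift of the Fourier--Mukai transform of a degree-$(g-1)$ line bundle pushed forward along an Abel--Jacobi embedding $i:C\hra J$, and then invoke Theorem~\ref{NC-FM-thm}. The one place you diverge is that the paper simply cites \cite[Prop.\ 2.3]{BP} for the structural fact that the Fourier transform of $i_*L$ has the form $j^{ns}_*L'[-1]$, whereas you re-derive it: represent $\GG$ by a two-term complex $[K^0\to K^1]$ of equal-rank bundles, kill $\und{H}^0$ by the generic-vanishing argument, deduce $\operatorname{pd}_{\OO_J}\FF=1$ hence $\FF$ is Cohen--Macaulay of dimension $g-1$ by Auslander--Buchsbaum, show via Martens' bound on $W^1_{g-1}$ and Riemann's singularity theorem that $\Theta$ is normal, and conclude $\FF=j^{ns}_*L$ by reflexivity of a rank-one $S_2$ sheaf on a normal variety. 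One small point you should make explicit: purity (implied by CM) together with rank one at the generic point is what forces $I_\Theta\cdot\FF=0$, i.e.\ $\FF$ is genuinely an $\OO_\Theta$-module rather than supported on a thickening. You should also note that the Martens step needs $g\ge 3$; for $g\le 2$ the theta divisor is already smooth and the argument degenerates harmlessly. With those remarks the proof is complete and amounts to a self-contained proof of the cited result from \cite{BP}.
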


\begin{proof}
Let $i:C\to J$ be the embedding associated with some point, and let $L$ be a line bundle of degree $g-1$
on $C$. Then the Fourier-Mukai transform of $i_*L$, is of the form $\FF[-1]$, where $\FF$ is of the described form (see \cite[Prop.\ 2.3]{BP}). It remains
to apply Theorem \ref{NC-FM-thm}.
\end{proof}

\begin{rem}
Note that if a coherent sheaf $\FF$ on $X$ extends to a bounded complex $P^\bullet$ of locally free modules of finite rank over for some NC-nilpotent thickening $A\to \OO_X$
then we can assume this complex to be concentrated in non-positive degrees (i.e., $P^{>0}=0$). Indeed, if $P^{>n+1}=0$ for $n\ge 0$, then 
$\coker(d_n:P^n\to P^{n+1})$ becomes zero when tensored with $\OO_X$. Hence, $\coker(d_n)=0$, so we can replace $P^n$ with $\ker(d_n)$ and $P^{n+1}$ with $0$.
Iterating this we will get a complex in non-positive degrees.
\end{rem}

Recall (see \cite{Mukai}) that a coherent sheaf $\FF$ on an abelian variety $A$ is called an IT-sheaf (resp., WIT-sheaf) of index $i$ if $H^j(A,\FF\ot P)=0$ for $j\neq i$ for
every $P\in \Pic^0(A)$ (resp., $\und{H}^j F(\FF)=0$ for $j\neq i$, where $F:D^b(A)\to D^b(\hat{A})$ is the Fourier-Mukai transform).

The following characterization of Jacobians is due to \cite{MMG} (based on Matsusaka-Ran criterion).

\begin{thm}\label{MMG-thm} 
(\cite[Thm.\ 4.3]{MMG}) Let $(A,\Theta)$ be an indecomposable principally polarized abelian variety.
Suppose there exists a WIT-sheaf $\GG$ of index $g$ on $A$ such that $c_i^H(\GG)=(-1)^i\th^i/i!$, where $\th=c_1^H(\OO(\Theta))$. Then $(A,\Theta)$ is a Jacobian of
a smooth curve.
\end{thm}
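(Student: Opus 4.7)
\emph{Proof plan.}

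The strategy is to apply the Fourier--Mukai transform $F:D^b(A)\to D^b(\hat{A})$ to the sheaf $\GG$, produce an effective algebraic $1$-cycle of minimal cohomology class $\hat{\th}^{g-1}/(g-1)!$ on $\hat{A}$, and then invoke the Matsusaka--Ran characterization of Jacobians. Since $\GG$ is WIT of index $g$, its transform $\hat{\GG}:=R^{g}F(\GG)$ is a coherent sheaf on $\hat{A}$ with $F(\GG)\simeq\hat{\GG}[-g]$.

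First I would translate the total Chern-class hypothesis $c(\GG)=e^{-\th}$ into Chern-character information. Applying Newton's identities to $c_i=(-1)^{i}\th^{i}/i!$, the power sums $p_k$ all vanish for $k\ge 2$ by an immediate induction, so $\ch(\GG)=r-\th$ with $r=\rk\GG$. Next I would compute $\ch(\hat{\GG})$ via the cohomological Mukai formula \cite[Cor.~1.18]{Mukai-2}. Using the principal polarization to identify $\hat{A}$ with $A$ via $\phi_\Theta$, one has $F_{H}(1)=(-1)^{g}[\mathrm{pt}]$ and $F_{H}(\th)=(-1)^{g-1}\th^{g-1}/(g-1)!$; the latter is read off from the identity $F_{H}(e^{\th})=e^{-\th}$ under $\phi_\Theta^{*}$, which in turn is a consequence of the Mumford relation $\phi_\Theta^{*}\hat{L}\simeq L^{-1}$ for $L=\OO_{A}(\Theta)$ with $\chi(L)=1$. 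Combining these,
\[
\ch(\hat{\GG})=\hat{\th}^{g-1}/(g-1)!+r\cdot[\mathrm{pt}]_{\hat{A}}.
\]
In particular $\ch_{j}(\hat{\GG})=0$ for $0\le j\le g-2$, so $\hat{\GG}$ is supported in dimension $\le 1$, and the cycle class of its one-dimensional locus of support, weighted by generic lengths, equals the minimal Poincar\'e class $\hat{\th}^{g-1}/(g-1)!$. This exhibits an effective algebraic $1$-cycle on $\hat{A}$ of minimal class.

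Finally, apply the Matsusaka--Ran criterion to the indecomposable principally polarized abelian variety $(\hat{A},\hat{\Theta})$: the existence of an effective algebraic $1$-cycle of class $\hat{\th}^{g-1}/(g-1)!$, together with indecomposability (which rules out the product-of-Jacobians alternative in Ran's extension), forces $\hat{A}$, and hence $A\cong\hat{A}$ via $\phi_\Theta$, to be the Jacobian of a smooth projective curve. The main obstacle is the sign bookkeeping in the Chern-character computation, ensuring that the degree-$(g-1)$ class really comes out as $+\hat{\th}^{g-1}/(g-1)!$ rather than its negative; once that is pinned down, effectiveness of the $1$-cycle is automatic, because $\hat{\GG}$ is a genuine coherent sheaf rather than a K-theoretic virtual object, and the class of its $1$-dimensional support is a sum of fundamental classes of irreducible subvarieties with positive multiplicities.
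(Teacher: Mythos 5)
The paper does not prove this theorem: it is imported verbatim as \cite[Thm.~4.3]{MMG}, with only the remark that the proof there is ``based on the Matsusaka--Ran criterion.'' Your sketch is the correct argument and matches the cited reference's strategy: the Newton-identity reduction gives $\ch^H(\GG)=r-\theta$; Mukai's cohomological formula (with $F_H(1)=(-1)^g[\mathrm{pt}]$ and $F_H(\theta)=(-1)^{g-1}\hat\theta^{g-1}/(g-1)!$, using $F(\GG)\simeq\hat\GG[-g]$) gives $\ch^H(\hat\GG)=\hat\theta^{g-1}/(g-1)!+r[\mathrm{pt}]$; the vanishing of $\ch^H_{\le g-2}(\hat\GG)$ combined with effectivity (pair the support cycle with powers of an ample class) forces $\dim\supp\hat\GG\le 1$; and the $1$-dimensional support cycle then has the minimal class. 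The only place you should be slightly more explicit is the final step: the class equality obtained is in cohomology, which is what Matsusaka--Ran requires, and the version of the criterion you need is Ran's extension to effective $1$-cycles (the indecomposability hypothesis is precisely what rules out the product-of-Jacobians alternative and forces the cycle to be a single reduced irreducible curve whose normalization's Jacobian is $A$). With that spelled out, the argument is complete.
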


Here is a version of this result that is better adapted to our context. 

\begin{prop}\label{Jac-prop} 
Let $(A,\Theta)$ be a principally polarized abelian variety, such that $\Theta$ is irreducible.
Assume that there exists a coherent sheaf $\FF$ on $A$, supported sheaf theoretically on $\Theta$ and of length $1$
at the general point of $\Theta$, 
such that
the Fourier transform of $\FF$ is supported in dimension $\le 1$. Then $(A,\Theta)$ is a Jacobian.
\end{prop}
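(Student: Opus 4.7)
The plan is to reduce the proposition to Theorem~\ref{MMG-thm} by constructing from $\FF$ a WIT-sheaf $\GG$ of index $g$ on $A$ whose Chern classes satisfy $c^H_i(\GG)=(-1)^i\th^i/i!$.

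First I would determine the Chern character of $\FF$ using the action of the Fourier-Mukai transform on rational cohomology \cite[Cor.\ 1.18]{Mukai-2}. That formula interchanges the halved grading on $H^*$, sending $H^{2j}(A,\Q)$ isomorphically onto $H^{2(g-j)}(\hat A,\Q)$, so $\ch^H_{g-j}(F(\FF))$ is determined up to sign by $\ch^H_j(\FF)$. The hypothesis that $F(\FF)$ is supported in dimension $\le 1$ forces $\ch^H_i(F(\FF))=0$ for $i<g-1$, so $\ch^H_j(\FF)=0$ for every $j\ge 2$. Together with $\ch^H_0(\FF)=0$ (as $\FF$ is torsion) and $\ch^H_1(\FF)=\th$ (as $\FF$ is generically of length $1$ on the irreducible divisor $\Theta$), this yields the strong relation $\ch^H(\FF)=\th$, and correspondingly $\ch^H(F(\FF))$ is a nonzero multiple of $\hat\th^{g-1}/(g-1)!$ in $H^{2(g-1)}(\hat A,\Q)$.

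Next I would transport $F(\FF)$ back to $A$ via the principal polarization $\phi_\Theta\colon A\rTo^{\sim}\hat A$ and apply a suitable twist to build $\GG$. A natural candidate is $\GG:=\phi_\Theta^*F(\FF)[g-1]\otimes \OO_A(-\Theta)$, or a close variant; using the compatibility of the Fourier-Mukai transform with $\phi_\Theta^*$ and with twisting by line bundles, together with the Chern character of $F(\FF)$ computed above, one checks that $F(\GG)$ is concentrated in cohomological degree $g$ (so $\GG$ is WIT of index $g$) and that $c^H_i(\GG)=(-1)^i\th^i/i!$. Theorem~\ref{MMG-thm} then concludes that $(A,\Theta)$ is a Jacobian.

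The main obstacle is showing that the complex $F(\FF)$ is concentrated in a single cohomological degree, so that $\GG$ ends up being a genuine coherent sheaf rather than a complex. In general a complex with $1$-dimensional support on a higher-dimensional smooth variety need not be a shifted sheaf, so extra rigidity is needed. The natural input is the vanishing $\ch^H_{\ge 2}(\FF)=0$, which forces the alternating sum $\sum_j(-1)^j\ch^H(\und H^j(F(\FF)))$ to equal the top-dimensional effective class proportional to $\hat\th^{g-1}/(g-1)!$. A positivity argument along the $1$-dimensional support, for example via Serre duality on the normalizations of its irreducible components, should then force all but one cohomology sheaf to vanish. Once this is settled, the remaining Chern-class verification reduces to a direct Grothendieck-Riemann-Roch calculation along $\supp(\GG)$.
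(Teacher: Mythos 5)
Your reduction to Theorem~\ref{MMG-thm} is the same target as the paper's, and the Chern character computation ($\ch^H_{\ge 2}(\FF)=0$ via Mukai's formula, hence $\ch^H(\FF)=\th$) is correct. But the crux of your plan --- showing that $F(\FF)$ is concentrated in a single cohomological degree, so that you can twist it into a genuine sheaf $\GG$ --- is a real gap, and the suggested fix (``a positivity argument along the $1$-dimensional support, for example via Serre duality on the normalizations'') is not an argument. There is no a~priori reason why an object $E$ with $F(E)=\FF$ and $1$-dimensional support should be a shifted sheaf; the constraint $\ch^H_{\ge 2}(\FF)=0$ only controls the alternating sum of the Chern characters of $\und H^j(E)$, which does not force all but one cohomology sheaf to vanish (Chern classes of sheaves with $0$- or $1$-dimensional support can cancel). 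Even if you could somehow get a sheaf, your candidate formula $\GG=\phi_\Theta^*F(\FF)[g-1]\otimes\OO_A(-\Theta)$ does not obviously satisfy WIT of index $g$, since twisting by $\OO_A(-\Theta)$ does not preserve WIT indices.

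The paper's proof sidesteps the issue entirely and proceeds differently. Write $\FF=F(E)$ with $\supp(E)=Z$ of dimension $\le 1$, pick an ample $D$ meeting each component of $Z$ transversally, and form the triangle $E\to E(D)\to E'$. For $D$ sufficiently ample all $\und H^iE(D)$ and $\und H^iE'$ are IT of index $0$, so the hypercohomology spectral sequence degenerates; combining the resulting long exact sequence with the fact that $\FF$ (being torsion) cannot map into the vector bundle $\und H^0F(E(D))$, one extracts a short exact sequence
$$0\to F(\und H^{-1}E(D))\to F(\und H^{-1}E')\to\FF\to 0,$$
and $\und H^{-1}E'$ has $0$-dimensional support since $D\cap Z$ is finite. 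Setting $\GG:=F(\und H^{-1}E(D))$, one gets $c_1^H(\GG)=-\th$ from the short exact sequence, $\ch^H_{\ge 2}(\GG)=0$ from Mukai's formula, and the WIT-of-index-$g$ property is automatic from $\und H^{-1}E(D)$ being IT of index $0$. This avoids ever having to know that $E$ is a shifted sheaf, and yields the WIT condition of Theorem~\ref{MMG-thm} structurally rather than by a separate verification. If you want to salvage your approach, you should replace the ``positivity'' claim with something along these lines, i.e., build $\GG$ from a cohomology sheaf of a suitably ample twist of $E$ rather than from $F(\FF)$ itself.
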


\Pf . We will reduce this to Theorem \ref{MMG-thm}.
By assumption, $\FF=F(E)$, where $E$ is an object of derived category of $\hat{A}\simeq A$ with the
support $Z\sub A$ of dimension $\le 1$. Pick an ample divisor $D$ that intersects each component
of $Z$ transversally, so that all the maps $\und{H}^iE\to \und{H}^iE(D)$ are injective.
Consider the exact triangle
$$E\to E(D)\to E'\to E[1].$$
Applying the Fourier transform and passing to cohomology we get an exact sequence
$$0\to \und{H}^{-1}F(E(D))\to \und{H}^{-1}F(E')\to \FF\to \und{H}^0F(E(D))\to \und{H}^0F(E')\to 0$$
and isomorphisms
$$\und{H}^iF(E(D))\rTo{\sim} \und{H}^iF(E') \ \text{ for } i\neq 0,-1.$$
If $D$ is sufficiently ample then the sheaves $\und{H}^iE(D)$ and $\und{H}^iE'$ are IT-sheaves of index $0$. Hence,
we have 
$$\und{H}^iF(E(D))\simeq F(\und{H}^iE(D)), \ \ \und{H}^iF(E')\simeq F(\und{H}^iE').$$
Thus, $\und{H}^0F(E(D))$ is a vector bundle and the map $\FF\to H^0F(E(D))$ from the torsion sheaf $\FF$
is zero. Thus, from the above exact sequence we get a short exact sequence
$$0\to F(\und{H}^{-1}E(D))\to F(\und{H}^{-1}E')\to \FF\to 0.$$
On the other hand, since the map $\und{H}^0E\to \und{H}^0E(D)$ is injective, we obtain an exact sequence
$$0\to \und{H}^{-1}E\to \und{H}^{-1}E(D)\to \und{H}^{-1}E'\to 0,$$
so the sheaf $\und{H}^{-1}E'$ has zero-dimensional support.
This implies that $c_1(F(\und{H}^{-1}E'))=0$. Hence, the sheaf 
$$\GG:=F(\und{H}^{-1}E(D))$$ satisfies
$$c_1^H(\GG)=-c_1^H(\FF)=-\th.$$ 
Also, since $\GG$ is the Fourier transform of a sheaf supported in
dimension $1$, we have $\ch_{\ge 2}^H(\GG)=0$ (as a consequence of Mukai's formula \cite[Cor.\ 1.18]{Mukai-2}),
which is equivalent to $c_i^H(\GG)=c_1^H(\GG)^i/i!$.
Finally, since $\und{H}^{-1}E(D)$ is an IT-sheaf of index $0$, $\GG$ is a WIT-sheaf of index $g$. Thus, $\GG$ satisfies the
conditions of Theorem \ref{MMG-thm}. 
\ed


\begin{cor}
Conjecture B implies Conjecture A.
\end{cor}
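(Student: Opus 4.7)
The plan is to assemble the Corollary from two pieces that are already in place in the paper, with Conjecture B serving only as the bridge between them. The ``only if'' direction of Conjecture A is Theorem \ref{Jac-thm}, which is proved unconditionally (via Theorem \ref{NC-FM-thm} applied to the pushforward $i_*L$ of a degree $g{-}1$ line bundle from a curve embedded in its Jacobian). So that direction of the Corollary requires nothing beyond what has already been established, independent of Conjecture B.

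For the ``if'' direction, I would start by assuming Conjecture B and letting $(A,\Theta)$ be a principally polarized abelian variety with $\Theta$ irreducible, together with a coherent sheaf $\FF$ on $A$ satisfying the hypotheses of Conjecture A: $\FF$ is supported on $\Theta$, has length $1$ at the generic point of $\Theta$, and extends to an object of $D^b(A^{NC})$ represented by a bounded complex of vector bundles. Applying Conjecture B to $\FF$ (in the direction ``extends $\Rightarrow$ FM-support has dimension $\le 1$'') yields that the Fourier--Mukai transform $F(\FF)\in D^b(\hat A)$ has support of dimension at most $1$. At that point the sheaf $\FF$ matches the hypotheses of Proposition \ref{Jac-prop} verbatim, so I would invoke it to conclude that $(A,\Theta)$ is the Jacobian of a smooth projective curve.

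There is essentially no obstacle to overcome: Proposition \ref{Jac-prop} was designed precisely as the geometric bridge from ``there is a length-$1$ sheaf on $\Theta$ whose FM transform is supported in dimension $\le 1$'' to ``$(A,\Theta)$ is a Jacobian'', by reduction to the Matsusaka--Ran-type criterion of Theorem \ref{MMG-thm}. The only things to verify are trivial bookkeeping matters, namely that ``supported on $\Theta$'' in Conjecture A coincides with ``supported sheaf-theoretically on $\Theta$'' in Proposition \ref{Jac-prop} (immediate from the length-$1$ condition at the generic point forcing $\Theta$ to be the scheme-theoretic support), and that the use of Conjecture B is in the direction which is the actual content of its ``only if'' implication. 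No new Hodge-theoretic or moduli input is needed beyond what has already been assembled.
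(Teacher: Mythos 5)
Your argument is exactly the paper's: the ``only if'' direction is Theorem \ref{Jac-thm}, and for ``if'' one applies Conjecture B to the given $\FF$ to obtain that $F(\FF)$ is supported in dimension $\le 1$, then invokes Proposition \ref{Jac-prop}. The extra bookkeeping remarks you add do not change the route; the proof is correct.
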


\begin{proof}
We only need to check the ``if" part of Conjecture A. If there exists a coherent sheaf $\FF$ on $A$ with the stated properties
then by Conjecture B its Fourier transform has support of dimension $\le 1$. Hence, by Proposition \ref{Jac-prop}, $(A,\Th)$ is a Jacobian.
\end{proof}


Recall that the Jacobians of curves belong to the Andreotti-Mayer locus of principally polarized abelian varieties 
$(A,\Th)$ for which the singular locus of $\Th$ has dimension $\ge g-4$.

\begin{prop}\label{ch-2-prop} Assume $(A,\Theta)$ is a principally polarized abelian variety of dimension $g\ge 4$, with $\Theta$ irreducible.
Suppose there exists a coherent sheaf $\FF$ on $A$, sheaf theoretically supported on $\Theta$, of length $1$ at the general point of $\Theta$, 
and such that $\ch_2^H(\FF)=0$. 
Then $\dim\Sing(\Th)\ge g-4$.
\end{prop}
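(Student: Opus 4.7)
The plan is to argue by contrapositive: assume $\dim\Sing(\Theta)\le g-5$ and derive a contradiction with $\ch_2^H(\FF)=0$. The key point is that under this strong bound on the singularities, Grothendieck-type Lefschetz theorems rigidly identify $\FF$ with the restriction of a global line bundle on $A$, after which the formula underlying Corollary \ref{codim-cor}(ii), combined with hard Lefschetz and the primitivity of a principal polarization, produces the contradiction.

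First I would reduce to the case that $\FF$ is a line bundle on $\Theta$. Replacing $\FF$ by $\FF/\tors$ (the quotient by $\OO_\Theta$-torsion) and then by the reflexive hull $(\FF/\tors)^{\vee\vee}$ modifies $\FF$ only in codimension $\ge 2$ on $\Theta$, hence in codimension $\ge 3$ in $A$, so $\ch_2^H$ is unchanged. Under the standing assumption $\dim\Sing(\Theta)\le g-5$, the divisor $\Theta$ is regular in codimension $1$ and Cohen-Macaulay (being a Cartier, hence LCI, subscheme of smooth $A$), so normal; moreover the depth of $\OO_{\Theta,y}$ at any $y\in\Sing(\Theta)$ equals its local dimension and is at least $4$. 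Grothendieck's local Picard theorem (SGA 2, Exp.~XI) then gives $\Pic(\Theta)\simeq\Pic(\Theta^{ns})$, so the reflexive rank $1$ sheaf $\FF$ is in fact a line bundle on $\Theta$. The Grothendieck-Lefschetz theorem (SGA 2, Exp.~XII), applicable since $\Theta$ is an ample LCI divisor with $\dim\Theta=g-1\ge 3$, then furnishes an $\LL\in\Pic(A)$ with $\FF\simeq\iota^*\LL$; set $\alpha:=c_1(\LL)\in\NS(A)\subset H^2(A,\Z)$.

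Next I would compute $\ch_2^H$ on the open set $U:=A\setminus\Sing(\Theta)$, where $\Theta^{ns}=\Theta\cap U$ is a smooth divisor in $U$. Grothendieck-Riemann-Roch for the smooth embedding $i_U:\Theta^{ns}\hra U$ (the computation underlying Corollary \ref{codim-cor}(ii)) gives
$$\ch_2^H(\FF)|_U = i_{U,*}\bigl(\alpha|_{\Theta^{ns}}-\theta|_{\Theta^{ns}}/2\bigr) = (\alpha-\theta/2)\cdot\theta|_U$$
via the projection formula. Since $\dim\Sing(\Theta)<g-2$, the Gysin long exact sequence provides an injection $H^4(A,\Q)\hra H^4(U,\Q)$, so $\ch_2^H(\FF)=0$ lifts to $(\alpha-\theta/2)\cdot\theta=0$ in $H^4(A,\Q)$. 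By hard Lefschetz on $A$ (valid for $g\ge 4$), cup product with $\theta$ is injective $H^2(A,\Q)\to H^4(A,\Q)$, so $\alpha=\theta/2$ in $H^2(A,\Q)$. Since $\alpha\in H^2(A,\Z)$, this forces $\theta$ to be divisible by $2$ in $\bigwedge^2 H^1(A,\Z)$, contradicting the fact that a principal polarization has all elementary divisors equal to $1$ and is therefore primitive.

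The main obstacle is the rigidity in the first step: both Grothendieck-type Lefschetz statements (the local Picard of $\Theta$ at its singularities, and $\Pic(A)\to\Pic(\Theta)$) must apply under the precise codimension hypothesis $\dim\Sing(\Theta)\le g-5$ in order to identify $\FF$ with a global line bundle on $A$. Once this is in place, the rest is a short Chern character calculation combined with hard Lefschetz and the lattice-theoretic definition of a principal polarization.
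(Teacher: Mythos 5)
Your overall strategy coincides with the paper's: restrict to the smooth locus of $\Theta$, apply Grothendieck--Riemann--Roch to the smooth divisor embedding, use the Picard/Chow restriction isomorphisms to express $c_1$ as the restriction of a class from $A$, then invoke hard Lefschetz and primitivity of a principal polarization. However, your reduction step contains a genuine error.

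You claim that replacing $\FF$ by $\FF/\tors$ and then by the reflexive hull $(\FF/\tors)^{\vee\vee}$ ``modifies $\FF$ only in codimension $\ge 2$ on $\Theta$, hence in codimension $\ge 3$ in $A$, so $\ch_2^H$ is unchanged.'' This is false for the first step: the $\OO_\Theta$-torsion of a coherent sheaf of generic length $1$ on $\Theta$ can perfectly well be supported in codimension $1$ on $\Theta$ (e.g.\ take $\FF = \OO_\Theta \oplus \OO_D$ with $D \subset \Theta$ a divisor). Such a torsion sheaf is supported in codimension $2$ in $A$ and contributes a nonzero class to $\ch_2^H$, so $\ch_2^H(\FF) = 0$ does \emph{not} imply $\ch_2^H\bigl((\FF/\tors)^{\vee\vee}\bigr) = 0$, and the rest of your argument applies to the wrong sheaf.

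The fix is to skip the reduction to a line bundle entirely, as the paper does: work directly with $F := \FF|_{\Theta'}$, a coherent sheaf of generic rank $1$ on the smooth divisor $\Theta' = \Theta \setminus \Sing(\Theta)$. Then $c_1(F) \in \CH_{g-2}(\Theta') = \Pic(\Theta')$ is well-defined (it records all codimension-$1$ contributions, including any torsion of $\FF$), and GRR gives
$\ch_2(\FF|_{A'}) = i_*\bigl(c_1(F) - c_1(N|_{\Theta'})/2\bigr)$
without needing $F$ to be a line bundle. The Picard isomorphisms then identify $c_1(F)$ with the restriction of some $\ell \in \Pic(A)$, and the rest of your Lefschetz/primitivity computation goes through unchanged. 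With this correction your argument matches the paper's.
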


\Pf . Set $A':=A\setminus\Sing(\Th)$, $\Th':=\Th\setminus\Sing(\Th)$,
and let $N=\OO_{\Th}(\Th)$ be the normal line bundle to $\Th$.
Consider the closed embedding $i:\Th'\to A'$.
By assumption, $\FF|_{A'}=i_*F$, where $F$ is a coherent sheaf on $\Th'$, generically of rank $1$.
By the Grothendick-Riemann-Roch theorem applied to the embedding $i$, we have
$$\ch_2(\FF|_{A'})=i_*[\ch(F)\Td^{-1}(N|_{\Th'})]_2=i_*(c_1(F)-\frac{c_1(N|_{\Th'})}{2})$$
in $\CH_{g-2}(A')_{\Q}$. Assume that $\dim\Sing(\Th)<g-4$. Then the natural maps
\begin{equation}\label{Pic-restr-maps}
\Pic(A)\to \Pic(\Th)\to \CH_{g-2}(\Th)\to\CH_{g-2}(\Th')
\end{equation}
are isomorphisms (see \cite[Sec.\ 3.1]{BP}), so there exists a class $\ell\in\Pic(A)$ such that $c_1(F)$ is the
restriction of $\ell$ to $\Th'$. Thus, we get an equality in $\CH_{g-2}(A')_{\Q}$,
$$\ch_2(\FF|_{A'})=c_1(\Th)\cdot (\ell-\frac{c_1(\Th)}{2})|_{A'}.$$
Since the restriction map $\CH_{g-2}(A)\to\CH_{g-2}(A')$ is an isomorphism, this implies
that 
$$\ch_2(\FF)=c_1(\Th)\cdot (\ell-\frac{c_1(\Th)}{2})=0$$
in $\CH_{g-2}(A)_{\Q}$ and hence, we have the same equality in $H^4(A,\C)$. 
Since $\ch^H_2(\FF)=0$ in $H^4(A,\C)$ by assumption, we get
$$c_1^H(\Th)\cup (\cl(\ell)-\frac{c_1^H(\Th)}{2})=0,$$
where $\cl:\CH_{g-1}(A)\to H^2(A,\Z)$ is the cycle map. But the map
$$H^2(A,\C)\rTo{\cup c_1^H(\Th)} H^4(A,\C)$$
is injective being part of the Lefschetz $\ssl_2$-action (since $2<g$). Hence, 
$$\cl(\ell)-\frac{c_1^H(\Th)}{2}=0$$
in $H^2(A,\Z)$ which contradicts to the fact that $c_1^H(\Th)$ is a primitive vector in $H^2(A,\Z)$.
\ed

\begin{cor}\label{ch-2-cor}
Assume $(A,\Theta)$ is a principally polarized abelian variety of dimension $g\ge 4$, with $\Theta$ irreducible.
Suppose there exists a coherent sheaf $\FF$ on $A$, sheaf theoretically supported on $\Theta$, of length $1$ at the general point of $\Theta$, 
such that $\FF$, viewed as an object in $D^b(A)$ extends to a complex of vector bundles on the standard $1$-smooth thickening of $A$.
Then $\dim\Sing(\Th)\ge g-4$.
\end{cor}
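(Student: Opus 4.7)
The plan is to observe that the conclusion is an immediate consequence of assembling two results already established in the excerpt: Proposition \ref{ch-prop} and Proposition \ref{ch-2-prop}. The hypothesis on $\FF$ has been carefully crafted so that the sheaf-theoretic/length condition matches the input of Proposition \ref{ch-2-prop} verbatim, and the only additional piece of data needed there is the vanishing $\ch_2^H(\FF)=0$. That vanishing is exactly what Proposition \ref{ch-prop} supplies, since it says that any object of $D^b(A)$ admitting an extension to a bounded complex of vector bundles on the standard $1$-smooth thickening has $\ch^H_{\ge 2}=0$, and in particular its degree-$2$ component vanishes.

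So the proof I would write is essentially one line: by Proposition \ref{ch-prop} applied to $E=\FF$ on $X=A$, the hypothesis that $\FF$ extends to a complex of vector bundles on the standard $1$-smooth thickening of $A$ forces $\ch_2^H(\FF)=0$ in $H^4(A,\Q)$. Together with the assumption that $g\ge 4$, that $\Th$ is irreducible, and that $\FF$ is sheaf-theoretically supported on $\Th$ with generic length $1$, this places us under the hypotheses of Proposition \ref{ch-2-prop}, whose conclusion is precisely $\dim\Sing(\Th)\ge g-4$.

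There is no real obstacle here: the substantive geometric input (the Grothendieck--Riemann--Roch computation on $\Th'$, the injectivity of the Lefschetz map $\cup c_1^H(\Th)\colon H^2(A,\C)\to H^4(A,\C)$ for $g>2$, and primitivity of $c_1^H(\Th)$) has already been carried out in Proposition \ref{ch-2-prop}, and the derived-category/Atiyah-class input has been carried out in Proposition \ref{ch-prop} via \cite[Cor.\ 3.3.5]{P-Tu}. Thus the corollary is just the formal conjunction of these two results, and the proof I would give amounts to naming them in the right order.
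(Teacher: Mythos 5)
Your proposal matches the paper's own proof exactly: the paper likewise deduces $\ch^H_{\ge 2}(\FF)=0$ from Proposition \ref{ch-prop} and then invokes Proposition \ref{ch-2-prop}. Nothing further is needed.
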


\begin{proof}
Indeed, by Proposition \ref{ch-prop}, we have $\ch_{\ge 2}^H(\FF)=0$. It remains to apply Proposition \ref{ch-2-prop}.
\end{proof}

\section{Relation to deformation quantization}\label{quant-sec}

\subsection{Dg-constructions}\label{dg-constr-sec}

\subsubsection{NC-smooth thickenings}

Here we recall the construction of an NC-smooth thickening associated with a torsion free flat connection $\nabla$ on $X$
(see \cite{P-Tu}) and a parallel construction of deformation quantizations (due to Fedosov \cite{Fedosov}).

Namely, one can extend the connection $\nabla$ on $T_X$ to a derivation $D_1$ on 
$$\AA_X:=\Om^{\bullet}_X\ot_{\OO_X} \hat{T}_{\OO_X}(\Om^1_X),$$
extending the de Rham differential on $\Om^\bullet_X$ and such that for $\a\in \Om^1_X$ one has
$$D_1(1\ot \a)=\nabla(\a).$$
On the other hand, one has an $\Om^\bullet_X$-linear derivation $D_0$ of $\AA_X$ such that $D_0(1\ot \a)=\a\ot 1$.

In \cite{P-Tu} we studied the dg-algebra $(\AA_X,D)$, where $D=D_0+D_1$, and showed that $\und{H}^0(\AA_X,D)$ gives an NC-smooth thickening.
We also considered a more general version of this construction in \cite{P-Tu},
starting with a  twisted version of $\AA_X$ and a not necessarily flat connection $\nabla$, but we will not use this more general construction here.


\subsubsection{Deformation quantizations}

Recall that an (algebraic) deformation quantization of $X$ is a sheaf of associative
$k[\![h]\!]$-algebras $\DD$ on $X$, complete with respect to the $h$-adic topology and flat over $k[\![h]\!]$, equipped with
an isomorphism $\rho:\DD/h\DD\rTo{\sim}\OO_X$.
In particular, looking at the commutator in $\DD$ one gets a Poisson bracket on $\OO_X$ such that
$ab-ba=h\{\rho(a),\rho(b)\} \mod h^2\DD$, and one usually says that $\DD$ quantizes this Poisson
bracket. The Poisson bivector $P\in H^0(X,\bigwedge^2 T_X)$ is characterized by the condition
$\lan P, df\wedge dg\ran=\{f,g\}$, where $f$ and $g$ are local functions on $X$.

Now let us start with a Poisson bivector $P$ on $X$.
Assume that we have a torsion free flat connection $\nabla$ on $X$ such that the Poisson bivector 
$P\in H^0(X,\bigwedge^2 T_X)$ is $\nabla$-horizontal. This means
that for any vector field $v$ and any pair of functions $f,g$ we have
$$v\cdot \{f,g\}=\{v\cdot f,g\}+\{f, v\cdot g\}.$$

The sheaf of (possibly degenerate) formal Weyl algebras associated with $P$ is the quotient
$$\WW_P=\hat{T}(\Om^1)[\![h]\!]/\JJ_P,$$
where the two-sided ideal $\JJ_P$ is generated by
$$\om_1\ot\om_2-\om_2\ot\om_1- h\lan P, \om_1\wedge \om_2\ran.$$
By the PBW-theorem, we have a direct sum decomposition
\begin{equation}\label{Poisson-decomposition-eq}
\hat{T}(\Om^1)[\![h]\!]=\JJ_P\oplus \hat{S}(\Om^1)[\![h]\!]
\end{equation}
where $\hat{S}(\Om^1)$ consists of symmetric tensors.
Thus, we have a canonical isomorphism of $\OO_X[\![h]\!]$-modules
\begin{equation}\label{WP-Sym-eq}
\hat{S}(\Om_X^1)[\![h]\!]\rTo{\sim}\WW_P,
\end{equation}
whose reduction modulo $h$ is an isomorphism of algebras.

\begin{prop}\label{Moyal-prop}
(i) The decomposition of $\Om^\bullet\ot_\OO\hat{T}(\Om^1)[\![h]\!]$
induced by \eqref{Poisson-decomposition-eq}
is compatible with the the derivations $D_0$ and $D_1$ (extended $h$-linearly).

\noindent
(ii) Let $\ov{D}_0$ and $\ov{D}_1$ denote the induced derivations of $\Om^\bullet\ot\WW_P$.
Then $\DD_{P,\nabla}:=\und{H}^0(\Om^\bullet\ot\WW_P,\ov{D}_0+\ov{D}_1)$ is a deformation quantization of $P$.

\noindent
(iii) If $(x_1,\ldots,x_n)$ is a formal coordinate system at a point $p\in X$, such that $dx_i$ are $\nabla$-horizontal, then 
we have an identification of the completion of $\DD_{P,\nabla}$ at $p$ with $\C[\![x_1,\ldots,x_n,h]\!]$ with the Moyal product
\begin{equation}\label{Moyal-eq}
f*g=\sum_{k=0}^\infty \sum_{i_1,\ldots,i_k;j_1,\ldots,j_k}\frac{h^k}{k!2^k}\Pi_{i_1j_1}\ldots\Pi_{i_kj_k}(\pa_{i_1}\ldots\pa_{i_k}f)(\pa_{j_1}\ldots\pa_{j_k}g).
\end{equation}
\end{prop}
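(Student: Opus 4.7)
The plan is to treat (i) by direct derivation-level computation on the generators of $\JJ_P$, to use the resulting compatibility in order to pass everything to the quotient $\Om^\bullet\ot\WW_P$, and then to extract a deformation quantization from the induced dg-complex in (ii) by combining the methods of \cite{P-Tu} with the standard Fedosov-type arguments. Part (iii) will be a local verification after trivializing $\nabla$ in flat coordinates.

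For (i), since $D_0$ and $D_1$ are derivations of $\AA_X$, it suffices to check that each sends a generator
$$r(\om_1,\om_2):=\om_1\ot\om_2-\om_2\ot\om_1-h\lan P,\om_1\we\om_2\ran$$
of $\JJ_P$ into $\Om^\bullet\ot\JJ_P$. The computation for $D_0$ is immediate from $D_0(1\ot\om)=\om\ot 1$: the four cross terms produced by applying $D_0$ to $\om_1\ot\om_2-\om_2\ot\om_1$ cancel pairwise, and $D_0$ kills the function $\lan P,\om_1\we\om_2\ran$. For $D_1$ one uses $D_1(1\ot\om)=\nabla\om$; applying $D_1$ to $\om_1\ot\om_2-\om_2\ot\om_1$ produces a sum of Christoffel-type contributions which, reduced modulo $\Om^\bullet\ot\JJ_P$, matches $hD_1(\lan P,\om_1\we\om_2\ran)=hd\lan P,\om_1\we\om_2\ran$ precisely because of the horizontality $\nabla P=0$. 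Torsion-freeness of $\nabla$ and the definition of $\hat S(\Om^1)$ as the subspace of symmetric tensors then imply that the symmetric complement is preserved as well, so the decomposition \eqref{Poisson-decomposition-eq} is respected by both $D_0$ and $D_1$.

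For (ii), the identity $\ov D^2=0$ with $\ov D:=\ov D_0+\ov D_1$ is inherited from $D^2=0$ on $\AA_X$ (proved in \cite{P-Tu} using flatness and torsion-freeness of $\nabla$) via the descent from (i). The sheaf $\DD_{P,\nabla}=\und H^0(\Om^\bullet\ot\WW_P,\ov D)$ then inherits an associative algebra structure from $\WW_P$, and flatness of $\DD_{P,\nabla}$ over $\C[\![h]\!]$ is ensured by the PBW isomorphism \eqref{WP-Sym-eq}, which identifies $\WW_P$ with $\hat S(\Om^1)[\![h]\!]$ as an $\OO_X[\![h]\!]$-module. Reducing modulo $h$, the complex $(\Om^\bullet\ot\WW_P/h,\ov D)$ becomes the commutative dg-model of $\OO_X$ studied in \cite{P-Tu} (built from $\Om^\bullet\ot\hat S(\Om^1)$), so $\DD_{P,\nabla}/h\simeq\OO_X$. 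The Poisson bracket is read off from the defining relation of $\JJ_P$: one has $[a,b]\equiv h\{a,b\}_P\pmod{h^2\DD_{P,\nabla}}$ for local sections $a,b$, so the bracket being quantized is $\{-,-\}_P$ as required.

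For (iii), choose formal coordinates $(x_1,\ldots,x_n)$ at $p$ with $dx_i$ horizontal, which is possible since $\nabla$ is flat and torsion-free. Then $D_1$ reduces to the ordinary de Rham differential, and $\ov D$-closed sections are identified tautologically with formal power series in the variables $y_i$ corresponding to $1\ot dx_i$. The induced product on $\C[\![x_1,\ldots,x_n,h]\!]$, computed by iteratively applying $y_iy_j-y_jy_i=h\Pi_{ij}$ to bring a tensor product into its symmetric representative under \eqref{Poisson-decomposition-eq}, yields exactly the Moyal formula \eqref{Moyal-eq}. The main obstacle in the whole argument is the $D_1$-preservation of $\JJ_P$ in (i): this is the one step that cannot be handled by purely formal manipulations and requires essential use of the horizontality $\nabla P=0$; once this is established, everything else proceeds by descent from the dg-picture of \cite{P-Tu} and by standard formal-Weyl computations.
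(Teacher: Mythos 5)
Your argument for part (i) is essentially the same as the paper's for the $D_0$-invariance. For $D_1$, the paper takes a slicker route — pass to a formal neighborhood, pick a $\nabla$-horizontal basis $\a_1,\ldots,\a_n$ of $\Om^1$, observe that the generators $\a_i\ot\a_j-\a_j\ot\a_i-h\Pi_{ij}$ of $\JJ_P$ and the monomials in $(\a_i)$ spanning $\hat S(\Om^1)$ are all killed by $D_1$ — whereas you unwind Christoffel symbols and invoke $\nabla P=0$ term by term. Both are correct, but note that your explanation of why the \emph{symmetric} complement is preserved (attributing it to torsion-freeness of $\nabla$) is not the right reason; for $D_0$ it is the direct computation $D_0(\a^{\ot n})\in\Om^1\ot\hat S(\Om^1)$, and for $D_1$ it is again the horizontal-basis argument. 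Torsion-freeness enters elsewhere (in $D^2=0$).

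The real gap is in (ii). You claim that flatness of $\DD_{P,\nabla}$ over $\C[\![h]\!]$ follows from the PBW isomorphism $\WW_P\simeq\hat S(\Om^1)[\![h]\!]$, and that $\DD_{P,\nabla}/h\simeq\OO_X$ follows by reducing the complex mod $h$. Neither inference is valid as stated: the PBW isomorphism only shows the \emph{terms} of the complex are $\C[\![h]\!]$-flat, not that its $\und H^0$ is. And reducing mod $h$ and taking $\und H^0$ do not automatically commute: the long exact sequence for $0\to\Om^\bullet\ot\WW_P\rTo{h}\Om^\bullet\ot\WW_P\to\Om^\bullet\ot\WW_P/h\to 0$ gives an exact sequence
\begin{equation*}
0\to \und H^0(\Om^\bullet\ot\WW_P)/h\to\und H^0(\Om^\bullet\ot\WW_P/h)\to\und H^1(\Om^\bullet\ot\WW_P)[h],
\end{equation*}
so surjectivity onto $\OO_X$ requires control on $\und H^1$. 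You also never address $h$-adic completeness, which is part of the definition of a deformation quantization. The missing ingredient — and the actual content of the paper's proof of (ii) and (iii) — is the degeneration of the complex: one views $(\Om^\bullet\ot\hat S(\Om^1),D_0^s)$ as the completed Koszul complex, introduces the standard contracting homotopy $H$, and uses \cite[Lem.\ 2.3.7]{P-Tu} to show $\id+HD_1^s$ is a well-defined automorphism conjugating $D_0^s+D_1^s$ to $D_0^s$. This gives an explicit $\OO$-linear isomorphism $\und H^0(\Om^\bullet\ot\WW_P,\ov D)\simeq\OO_X[\![h]\!]$, from which flatness, completeness, and $\DD/h\simeq\OO_X$ are all immediate. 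This conjugation also produces the explicit section $\kappa(f)=f+\sum_{m\ge 1}\frac{1}{m!}\pa_{i_1}\cdots\pa_{i_m}f\,dx_{i_1}\ot\cdots\ot dx_{i_m}$, which is what reduces (iii) to the classical Fedosov computation $\kappa(f)\cdot\kappa(g)=\kappa(f*g)$. Your sketch of (iii) gestures at the right idea but, without the explicit $\kappa$, the claim that the product ``yields exactly'' the Moyal formula is an assertion rather than a proof, and the statement that ``$D_1$ reduces to the ordinary de Rham differential'' is imprecise: the correct statement is that in horizontal coordinates $D_1$ annihilates the $\hat T(\Om^1)$ factors.
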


\begin{proof}
(i) We have $D_0(\om_1\ot\om_2-\om_2\ot\om_1)=0$, which easily implies that $D_0(\JJ_P)\sub \Om^1\ot \JJ_P$. On the other hand, for any $\a\in \Om^1$
one has $D_0(\a^{\ot n})\sub \Om^1\ot \hat{S}(\Om^1)$, hence $D_0(\hat{S}(\Om^1))\sub \Om^1\ot \hat{S}(\Om^1)$. 

To see invariance with respect to $D_1$,
let us choose a $\nabla$-horizontal basis $\a_1,\ldots,\a_n$ of $\Om_X^1$ in the formal neighborhood of some point. Then $\JJ_P$ is generated by
$\a_i\ot \a_j-\a_j\ot\a_i-h\Pi_{ij}$ where $\Pi_{ij}$ are constants. Since these elements are killed by $D_1$, we get $D_1(\JJ_P)\sub \Om^1\ot \JJ_P$.
Similarly, $S(\Om^1)$ is spanned by the monomials in $(\a_i)$, which are killed by $D_1$, so $D_1(\hat{S}(\Om^1))\sub \Om^1\ot \hat{S}(\Om^1)$.

\noindent
(ii),(iii) By (i), we have an isomorphism of complexes
$$(\Om^\bullet\ot \hat{S}(\Om^1)[\![h]\!],D_0^s+D_1^s)\rTo{\sim} (\Om^\bullet\ot \WW_P, \ov{D}_0+\ov{D}_1),$$
where $D_i^s$ is the restriction of $D_i$, $i=0,1$. Viewing $\hat{S}(\Om^1)$ as a completion of the symmetric algebra, it is easy to check that
$(\Om^\bullet\ot \hat{S}(\Om^1),D_0^s)$ is the completion of the Koszul complex ${\bigwedge}^\bullet (V)\ot S^\bullet(V)$ for $V=\Om^1$. Thus, we can
consider the standard homotopy 
$$H_i:{\bigwedge}^i(V)\ot S^j(V)\to {\bigwedge}^{i-1}(V)\ot S^{j+1}(V), \ i\ge 1,$$ 
of the positive degree terms of the Koszul complex given by the (suitably rescaled) action of the Euler vector field.
By \cite[Lem.\ 2.3.7]{P-Tu}, the operator $\id+HD_1^s$ extends to a well defined $\OO$-linear automorphism of $\Om^\bullet\ot \hat{S}(\Om^1)$ such that 
$$D_0^s+D_1^s=(\id+HD_1^s)^{-1}D_0^s(\id+HD_1^s),$$
which induces an isomorphism of $0$th cohomology we are interested in with the $0$th cohomology of $D_0^s$, which is identified $\OO_X[\![h]\!]$.

Explicitly, if $(x_1,\ldots,x_n)$ is a formal coordinate system as in (iii), then an element in $\ker(D_0^s+D_1^s)\sub\hat{S}(\Om^1)$ for a function $f$ is
$$\kappa(f):=f+\sum_{m\ge 1, i_1,\ldots,i_m} \frac{1}{m!}\pa_{i_1}\ldots\pa_{i_m}f\cdot dx_{i_1}\ot\ldots\ot dx_{i_m}.$$

It is well known (see e.g., \cite{Fedosov}) that if we equip $\hat{S}(\Om^1)$ using the identification \eqref{WP-Sym-eq} with $\WW_P$,
then $\kappa(f)\cdot \kappa(g)=\kappa(f*g)$, where $f*g$ is given by \eqref{Moyal-eq}. 
\end{proof}

We will refer to the deformation quantization $\DD_{P,\nabla}$ above as the {\it Moyal deformation quantization} associated with $(P,\nabla)$.

\subsection{Homomorphisms from NC-smooth thickenings to deformation quantizations}

It is natural to ask whether there exist any homomorphisms from NC-smooth thickenings of $X$ to deformation quantizations of $X$.
The answer is always positive in the affine case as the following result shows. Recall that in the affine case an NC-smooth thickening always exists and
is unique up to isomorphism 
(see \cite{Kapranov}).

\begin{prop}\label{NC-to-quant-affine-prop} 
Let $X=\Spec(A)$ be a smooth affine variety, and let $A^{NC}$ be the NC-smooth completion of $A$.
Then for any deformation quantization $D$ of $A$, there exists a homomorphism $A^{NC}\to D$ compatible with the filtrations
$(F^nA^{NC})$ and $(h^nD)$, and inducing 
the identity $A^{NC}/F^1=A\to A=D/hD$. For any such homomorphism, the induced map
$$\Om^2_A\simeq F^1/F^2\to hD/h^2D=hA$$
is given by the Poisson bivector associated with $D$.
\end{prop}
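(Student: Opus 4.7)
The plan is to construct the homomorphism $\phi\colon A^{NC}\to D$ as an inverse limit of compatible homomorphisms $\phi_n\colon A^{NC}\to D/h^{n+1}D$, obtained by repeatedly invoking the lifting property of NC-smoothness. The key preliminary observation is that every quotient $D/h^{n+1}D$ is NC-nilpotent: since $D/hD\simeq A$ is commutative, the commutator of any two elements of $D$ lies in $hD$, hence any $d$-fold nested commutator lies in $h^dD$. Because $h^dD$ is a two-sided ideal, the two-sided ideal it generates is still contained in $h^dD$, so $F^dD\subset h^dD$, and in particular $F^d(D/h^{n+1}D)=0$ for $d\ge n+1$.

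The induction then proceeds as follows. Set $\phi_0\colon A^{NC}\to A^{NC}/F^1=A=D/hD$ to be the canonical projection. Given $\phi_n$, the surjection $D/h^{n+2}D\twoheadrightarrow D/h^{n+1}D$ of NC-nilpotent algebras has a square-zero bimodule kernel $h^{n+1}D/h^{n+2}D\simeq A$. NC-smoothness of $A^{NC}$ (in Kapranov's sense) then yields a lift $\phi_{n+1}\colon A^{NC}\to D/h^{n+2}D$ of $\phi_n$. Passing to the inverse limit over $n$ gives the desired $\phi\colon A^{NC}\to D$ whose reduction mod $h$ is the identity. Compatibility with the filtrations is automatic: since $\phi$ is an algebra homomorphism, $\phi(F^dA^{NC})$ is contained in the two-sided ideal of $D$ generated by $d$-fold nested commutators of elements of $\phi(A^{NC})$, which lies in $h^dD$ by the preliminary observation.

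For the second assertion, given any $\phi$ satisfying the hypotheses, a commutator $[a,b]\in F^1A^{NC}$ (with $a,b$ lifting $\bar a,\bar b\in A$) maps to $[\phi(a),\phi(b)]$, whose class modulo $h^2D=h^2A$ equals $h\{\bar a,\bar b\}=h\lan P,d\bar a\we d\bar b\ran$ by the very definition of the Poisson bivector associated with $D$. Under the canonical isomorphism $\Om^2_A\simeq F^1/F^2$ from Kapranov's construction (which matches $d\bar a\we d\bar b$ with the class of the commutator $[a,b]$ up to a universal normalization coming from the formula \cite[(1.3.9)]{Kapranov}), this identifies the induced map on associated graded pieces with contraction against $P$, as claimed.

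The main potential subtlety lies at the inductive lifting step: one must verify that Kapranov's formulation of NC-smoothness does indeed lift algebra homomorphisms along a square-zero bimodule extension of an NC-nilpotent algebra. This is precisely its defining property for NC-complete algebras, so the induction goes through without further complications; the remaining work is a routine bookkeeping of filtrations and commutators.
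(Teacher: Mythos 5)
Your proof is correct and follows the same inductive, order-by-order lifting strategy as the paper's, but invokes Kapranov's framework through a slightly different mechanism. The paper constructs $\phi_n\colon A^{NC}/F^nA^{NC}\to D/h^nD$ by pulling back the central extension $D/h^nD\to D/h^{n-1}D$ along $\phi_{n-1}$ and then appealing to the fact that $A^{NC}/F^nA^{NC}$ is the \emph{universal} central square-zero extension of $A^{NC}/F^{n-1}A^{NC}$ (Kapranov, Prop.\ (1.6.2) combined with Prop.\ (1.3.8)); you instead apply the defining infinitesimal lifting property of NC-smooth algebras directly to the square-zero extension $D/h^{n+2}D\to D/h^{n+1}D$. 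Both routes are valid; your version has the merit of making explicit the verification that is implicit in the paper's argument, namely that $F^dD\subset h^dD$ (since $[D,D]\subset hD$), so that each $D/h^nD$ is NC-nilpotent and the target is in the admissible class for the lifting axiom. The paper's version via universality is marginally more canonical since it produces maps at the level of truncations $A^{NC}/F^n$, so filtration compatibility is built in rather than checked afterward, but that is a bookkeeping difference, not a mathematical one. The treatment of the final assertion — reading off the Poisson bivector from commutators on associated graded pieces — is the same in both.
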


\begin{proof}
It suffices to construct recursively a collection of compatible homomorphisms $\phi_n:A^{NC}/F^nA^{NC}\to D/h^nD$, starting with the identity map
$A^{NC}/F^1\to D/hD$. Assume we already constructed a homomorphism $\phi_{n-1}:A^{NC}/F^{n-1}A^{NC}\to D/h^{n-1}D$ compatible with the filtrations.
Now we use the fact that $A^{NC}/F^nA^{NC}$ is the universal square zero extension of $A^{NC}/F^{n-1}A^{NC}$ by a central bimodule (see
\cite[Prop.\ (1.6.2)]{Kapranov}). This implies that $A^{NC}/F^nA^{NC}$ maps to any other central extension of $A^{NC}/F^{n-1}A^{NC}$ (see
\cite[Prop.\ (1.3.8)]{Kapranov}).
Now $D/h^nD$ is a central extension of $D/h^{n-1}D$ by $D/hD$, and we can pull it back to get a central extension of $A^{NC}/F^{n-1}A^{NC}$. 
Applying the universality we get the required homomorphism $\phi_n$.

The last assertion is obtained by considering the commutator maps 
$$F^0/F^1\times F^0/F^1\to F^1/F^2, \ \ D/hD\times D/hD\to hD/h^2D,$$
and observing that the first one is identified with $(f,g)\mapsto df\we dg$ and the second is the Poisson bracket associated with $D$.
\end{proof}

Now we consider the global setting for the NC-thickening associated with a torsion-free flat connection.

\begin{prop}\label{NC-to-quant-prop} 
Let $\OO_X^{NC}$ be the smooth NC-thickening of $X$ associated with a torsion-free flat connection 
$\nabla$ on $X$. 
Let $P$ be a $\nabla$-horizontal Poisson bivector on $X$,
and let $\DD=\DD_{P,\nabla}$ be the Moyal deformation quantization algebra associated with $(P,\nabla)$ (see Proposition \ref{Moyal-prop}).
Then there exists a 
homomorphism
\begin{equation}\label{NC-to-def-eq}
\OO_X^{NC}\to \DD,
\end{equation}
compatible with the filtrations $(F^n=F^n\OO_X^{NC})$ and $(h^n\DD)$, such that the induced map $\OO_X=F^0/F^1\to \DD/h\DD=\OO_X$ is the identity,
and the induced map $\Om^2_X=F^1/F^2\to h\DD/h^2\DD=h\cdot \OO_X$ is given by $h\Pi$.
\end{prop}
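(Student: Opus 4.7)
The plan is to exploit the parallel dg-constructions of $\OO_X^{NC}$ and $\DD_{P,\nabla}$ recalled in Section \ref{dg-constr-sec}, and produce \eqref{NC-to-def-eq} as the map on $\und{H}^0$ induced by a natural quotient of dg-algebras. Explicitly, the surjection of $\OO_X$-algebras $\hat{T}_{\OO_X}(\Om^1_X)[\![h]\!]\to\WW_P$ with kernel $\JJ_P$, extended $h$-linearly and $\Om^\bullet$-linearly, should fit into a morphism
$$(\Om^\bullet_X\ot_{\OO_X}\hat{T}_{\OO_X}(\Om^1_X)[\![h]\!],\,D_0+D_1)\longrightarrow (\Om^\bullet_X\ot_{\OO_X}\WW_P,\,\ov{D}_0+\ov{D}_1)$$
of dg-algebras. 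The only thing to verify for this is that $\JJ_P$ is stable under $D_0$ and $D_1$, which is precisely what Proposition \ref{Moyal-prop}(i) already establishes. Taking $\und{H}^0$ yields a homomorphism of sheaves of algebras $\OO_X^{NC}[\![h]\!]\to\DD$, and precomposing with the canonical inclusion $\OO_X^{NC}\hookrightarrow\OO_X^{NC}[\![h]\!]$ gives the map \eqref{NC-to-def-eq}.

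Next I would check the filtration compatibility, which is a direct consequence of the defining relations of $\WW_P$. Any commutator of two elements of $\WW_P$ is divisible by $h$, so by induction on $d$ the image of the two-sided ideal $F^d\hat{T}_{\OO_X}(\Om^1_X)$ generated by $d$-fold (possibly nested) commutators lies in $h^d\WW_P$. Passing to $\und{H}^0$, this shows that $F^d\OO_X^{NC}$ is carried into $h^d\DD$, as required.

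Finally, I would identify the induced maps on associated graded pieces. The map $F^0/F^1=\OO_X\to \DD/h\DD=\OO_X$ is the identity by construction, since both sides arise from the same structure sheaf via the respective $\und{H}^0$. To compute the map on $F^1/F^2\simeq \Om^2_X$, I would work locally in a $\nabla$-flat basis $\a_1,\dots,\a_n$ of $\Om^1_X$ as in the proof of Proposition \ref{Moyal-prop}(iii). Kapranov's canonical isomorphism $F^1/F^2\simeq \Om^2_X$ sends the class of $\a_i\ot\a_j-\a_j\ot\a_i$ to $\a_i\we\a_j$, while the Weyl relation reads $\a_i\ot\a_j-\a_j\ot\a_i\equiv h\lan P,\a_i\we\a_j\ran=h\Pi_{ij} \pmod{\JJ_P}$, which is exactly $h\Pi$ evaluated on $\a_i\we\a_j$. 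I do not expect any serious obstacle here: essentially all ingredients are already in Section \ref{dg-constr-sec} and Proposition \ref{Moyal-prop}, and the proof amounts to organizing them. The one point requiring mild care is the distinction between $\OO_X^{NC}$ and $\OO_X^{NC}[\![h]\!]$, but since the composition $\OO_X^{NC}\to\OO_X^{NC}[\![h]\!]\to\DD$ respects filtrations automatically, no additional completion argument is needed.
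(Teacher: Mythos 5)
Your proposal is correct and takes essentially the same route as the paper: both produce \eqref{NC-to-def-eq} from the natural projection of dg-algebras $(\Om^\bullet\ot\hat{T}(\Om^1),D_0+D_1)\to(\Om^\bullet\ot\WW_P,\ov D_0+\ov D_1)$ (justified by Proposition \ref{Moyal-prop}(i)) by passing to $\und H^0$. Two small stylistic points: you can avoid discussing $\und H^0(\Om^\bullet\ot\hat{T}(\Om^1)[\![h]\!])$ altogether by simply using the composite $\hat{T}(\Om^1)\hra\hat{T}(\Om^1)[\![h]\!]\to\WW_P$ at the dg-level, as the paper does; and the filtration compatibility follows most cleanly from the formal observation that any algebra map preserves iterated commutators, together with $\DD$ having $h$-divisible commutators, rather than from chasing a filtration on $\hat{T}(\Om^1)$.
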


\begin{proof} We use the dg-constructions of Sec.\ \ref{dg-constr-sec} defining $\OO_X^{NC}$ and $\DD_{P,\nabla}$.
The natural projection gives a homomorphism of sheaves of dg-algebras
$$(\Om_X^\bullet\ot_{\OO_X}\hat{T}(\Om^1_X), D_0+D_1)\to (\Om_X^\bullet\ot_{\OO_X}\WW_P, \ov{D}_0+\ov{D}_1).$$
Passing to the $0$th cohomology sheaves we get homomorphism \eqref{NC-to-def-eq}.
The identification of $F^0/F^1$ (resp., $\DD/h\DD$) with $\OO_X$ is induced by the natural projection $\hat{T}(\Om^1)\to \OO_X$ 
(resp., $\WW_P\to \WW_P/h\WW_P\to \OO_X$). This implies the assertion about the induced map on $F^0/F^1$. The assertion about the induced map
on $F^1/F^2$ follows by considering commutators, as in the proof of Proposition \ref{NC-to-quant-affine-prop}.
\end{proof}


\begin{cor} Let $E$ is an object of $D^b(A)$, where $A$ is an abelian variety, such that the Fourier-Mukai transform of $E$ has support of dimension $\le 1$.
Then $E$ extends to a perfect complex over the Moyal deformation quantization of any constant Poisson bracket $P$ on $A$.
\end{cor}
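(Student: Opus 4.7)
The plan is to combine the extension result on the NC-thickening (Theorem \ref{NC-FM-thm}) with the homomorphism from the NC-thickening to the Moyal quantization (Proposition \ref{NC-to-quant-prop}) via base change. More precisely, my approach has three steps.

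First, since the Fourier-Mukai transform of $E$ is supported in dimension $\le 1$, Theorem \ref{NC-FM-thm} produces a perfect complex $P^\bullet$ on $A^{NC}$ extending $E$, that is, a bounded complex of locally free left $\OO_A^{NC}$-modules of finite rank such that $\OO_A \otimes_{\OO_A^{NC}} P^\bullet$ represents $E$ in $D^b(A)$. Second, any constant Poisson bivector $\Pi$ on $A$ is translation-invariant, and hence horizontal for the canonical flat translation-invariant connection $\nabla$ on $A$ (whose flat sections are precisely the translation-invariant tensors). Therefore Proposition \ref{NC-to-quant-prop} applies and yields a homomorphism of sheaves of filtered algebras
$$\phi:\OO_A^{NC}\to \DD:=\DD_{\Pi,\nabla},$$
whose composition with the projection $\DD\to \DD/h\DD=\OO_A$ agrees with the canonical projection $\OO_A^{NC}\to \OO_A^{NC}/F^1=\OO_A$.

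Third, I perform base change along $\phi$. Viewing $\DD$ as a right $\OO_A^{NC}$-module via $\phi$, set
$$Q^\bullet:=\DD\otimes_{\OO_A^{NC}} P^\bullet.$$
Locally on $A$, each $P^i\simeq (\OO_A^{NC})^{n_i}$, so each $Q^i\simeq \DD^{n_i}$ is locally free of finite rank as a left $\DD$-module, and $Q^\bullet$ is a bounded complex of such modules, i.e.\ a perfect complex over $\DD$. To verify that $Q^\bullet$ extends $E$, I use the compatibility asserted in Proposition \ref{NC-to-quant-prop}: reducing modulo $h$ gives
$$\OO_A\otimes_{\DD}Q^\bullet \;=\;(\DD/h\DD)\otimes_{\OO_A^{NC}}P^\bullet \;=\;\OO_A\otimes_{\OO_A^{NC}}P^\bullet,$$
and the last term represents $E$ by the first step.

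I do not anticipate a genuine obstacle in this argument, since the result is essentially a formal composition of the two key inputs. The only point requiring a little care is the compatibility of the two reductions used in the last displayed equation, namely that the homomorphism $\OO_A^{NC}\to\DD\to\OO_A$ is the canonical projection; this is precisely the content of the last part of Proposition \ref{NC-to-quant-prop} and makes the base change along $\phi$ compatible with the base change along $\OO_A^{NC}\to\OO_A$.
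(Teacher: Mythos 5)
Your argument is correct and coincides with the paper's proof: extend $E$ to a perfect complex on $A^{NC}$ via Theorem \ref{NC-FM-thm}, then base change along the homomorphism $\OO_A^{NC}\to\DD_{P,\nabla}$ of Proposition \ref{NC-to-quant-prop}. You have usefully spelled out two points the paper leaves implicit, namely that a constant Poisson bivector on $A$ is horizontal for the canonical flat connection (so Proposition \ref{NC-to-quant-prop} indeed applies) and that the compatibility $\OO_A^{NC}\to\DD\to\OO_A=\OO_A^{NC}/F^1$ is exactly what makes the two reductions agree, but this is the same route.
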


\begin{proof}
By Theorem \ref{NC-FM-thm}, there exists an extension of $E\in D^b(A)$ to a perfect complex $\wt{E}$ of $\OO_A^{NC}$-modules.
Now consider a homomorphism from $\OO_A^{NC}$ to the Moyal quantization $\AA_P$ of $\OO_A$ associated
with $P$. Then $E_P=\AA_P\ot_{\OO_A^{NC}} E$ is a perfect complex of $\AA_P$-modules extending $E$.
\end{proof}

\begin{exs} 1. Let $C$ be a smooth genus $2$ curve, and let $i:C\to J$ denote a standard embedding into the Jacobian. Since $J$ is $2$-dimensional, there is a unique nonzero constant Poisson bracket $P$ on $J$, up to rescaling. Then for a line bundle $L$ on $C$, the coherent sheaf $i_*L$ extends to a module over the Moyal deformation quantization $\AA_P$ of $P$,
flat over $k[\![h]\!]$, if and only if
$\deg(L)=1$ (by the main result of \cite{BGKP}). It is easy to see that this is equivalent to the condition that the Fourier transform of $i_*L$ has support of dimension $\le 1$.
Thus, by Theorem \ref{NC-FM-thm}, we get that $L$ extends to a flat module over $\AA_P$ if and only if it extends to a perfect complex over $\OO_J^{NC}$.

\noindent
2. More generally, let $A$ be an abelian surface, and let $i:C\hra A$ be a smooth connected divisor, which is a curve of genus $g\ge 2$, and let $L$ be a line bundle on $C$.
Then still by \cite{BGKP}, the sheaf $i_*L$ extends to the Moyal deformation quantization of a nonzero constant Poisson bracket $P$ on $A$,
flat over $k[\![h]\!]$, if and only if $\deg(L)=g-1$.
Assuming that $\deg(L)=g-1$, we get that the Fourier transform of $L$ on the Jacobian $J_C$ has form $\FF[-1]$, where $\FF$ is a coherent sheaf supported sheaf-theoretically
on the theta-divisor $\Th_L\sub J_C$ and of generic rank $1$ on $\Th_L$. The Fourier transform of $i_*(L)$ on $\hat{A}$ is $Lf^*\FF[-1]$, where $f:\hat{A}\to J_C$
is induced by $i$. Thus, if $f(\hat{A})$ is not contained in $\Th_L$, this Fourier transform is supported in dimension $\le 1$, which implies by Theorem \ref{NC-FM-thm} that $i_*L$ extends to the standard NC-smooth thickening of $A$. 
I do not know examples when $f(\hat{A})$ is contained in $\Th_L$ (see however \cite{AC} and references therein): if such examples exist then checking whether $i_*L$ extends to $A^{NC}$ would be a good testing case for Conjecture B. 
\end{exs}

\subsection{First order deformation quantizations of abelian varieties and the theta divisor}

One can also ask whether a  version of the Conjecture A holds in which the NC-thickening
of $A$ is replaced by the deformation quantization with respect to a generic constant Poisson structure on $A$.
We are going to show that the condition of extendability to
the 1st order deformation quantization implies that $(A,\Th)$ is in the Andreotti-Mayer locus.

Let $P\in H^0(X,\bigwedge^2 T_X)$ be a global bivector field on a smooth variety $X$. 
By the {\it standard 1st order deformation of $X$ associated with $P$} we mean the sheaf of algebras
$\OO^P_{\le 1}=\OO_X[h]/h^2$ with the multiplication 
$$(f_0+f_1h)\star (g_0+g_1h)=f_0g_0+(\{f_0,g_0\}+f_0g_1+f_1g_0)h,$$
where $\{f_0,g_0\}=\lan P, df_0\wedge dg_0\ran$.

\begin{prop}\label{1st-order-def-qu-prop} 
Let $(A,\Theta)$ be a principally polarized abelian variety of dimension $g\ge 4$, such that $\Theta$ is irreducible. There exists a countable union $Z$ of proper linear subspaces in 
$H^0(A,\bigwedge^2 T_A)$, such that any bivector $P\in H^0(A,\bigwedge^2 T_A)\setminus Z$
has the following property. 
Suppose for some line bundle $L$ on $\Th':=\Th\setminus \Sing(\Th)$ the coherent sheaf $i_*L$ on $A':=A\setminus\Sing(\Th)$, where $i:\Th'\to A'$ is the embedding,
extends to a module over the standard 1st order deformation of $A'$ associated with $P$, flat over $k[h]/h^2$. Then $\dim\Sing(\Th)\ge g-4$.
\end{prop}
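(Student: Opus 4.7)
The plan is to parallel the proof of Corollary \ref{ch-2-cor}, replacing the Chern class vanishing $\ch_2^H(\FF)=0$ (which in the NC-thickening case came from Proposition \ref{ch-prop}) by a $P$-twisted vanishing produced by extension to $\OO^P_{\le 1}$, and then choosing $P$ outside a countable union of proper linear subspaces to make the $P$-twisted vanishing as strong as possible.

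The first step, which is the crux of the argument, is to extract from \cite{BGP} the correct first-order obstruction: if $i_*L$ extends to a flat module over $\OO^P_{\le 1}|_{A'}$, then the trace of the obstruction class, naturally identified with the pairing
$$\bigl\lan P,\ch_2^H(i_*L)\bigr\ran \in H^2(A',\OO_{A'}),$$
vanishes, where the contraction uses $H^0(A,\bigwedge^2 T_A)\times H^2(\Omega^2_A)\to H^2(\OO_A)$. Under the assumption $\dim\Sing(\Th)<g-4$ that we want to contradict, the complement $A\setminus A'$ has codimension $\ge 5$, so by cohomological purity the restriction $H^2(A,\OO_A)\to H^2(A',\OO_{A'})$ is an isomorphism and we may work on $A$. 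This is a Poisson analog of Proposition \ref{ch-prop}.

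The second step mirrors the proof of Proposition \ref{ch-2-prop} verbatim. Assume for contradiction that $\dim\Sing(\Th)<g-4$. Then the composition \eqref{Pic-restr-maps} is an isomorphism, and by Grothendieck-Riemann-Roch applied to $i:\Th'\hra A'$ there exists $\ell\in\Pic(A)$ such that
$$\ch_2^H(i_*L)= c_1^H(\Th)\cup\bigl(\cl(\ell)-c_1^H(\Th)/2\bigr)=: \xi_\ell \ \in \ H^4(A,\C)$$
(after the canonical identification $H^4(A,\C)=H^4(A',\C)$, which again holds since the complement has codimension $\ge 5$). Combined with the first step, the extension of $i_*L$ to $\OO^P_{\le 1}|_{A'}$ forces $\lan P,\xi_\ell\ran=0$ in $H^2(A,\OO_A)$.

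The final step is a genericity argument. Since both $\cl(\ell)$ and $c_1^H(\Th)$ are of Hodge type $(1,1)$, the class $\xi_\ell$ lies in $H^{2,2}(A)=\bigwedge^2 H^1(\OO_A)\otimes\bigwedge^2 H^0(\Om^1_A)$, and the linear functional $P\mapsto\lan P,\xi_\ell\ran$ on $H^0(A,\bigwedge^2 T_A)\cong\bigwedge^2 H^0(\Om^1_A)^*$ vanishes identically if and only if $\xi_\ell=0$. But $\xi_\ell=0$ would imply $\cl(\ell)=c_1^H(\Th)/2$ by Lefschetz injectivity of $\cup c_1^H(\Th):H^2(A,\C)\to H^4(A,\C)$ (valid since $g\ge 4>2$), contradicting the primitivity of $c_1^H(\Th)\in H^2(A,\Z)$. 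Hence for each $\ell\in H^2(A,\Z)$ the set $Z_\ell:=\{P:\lan P,\xi_\ell\ran=0\}$ is a proper linear subspace of $H^0(A,\bigwedge^2 T_A)$, and $Z:=\bigcup_{\ell\in H^2(A,\Z)}Z_\ell$ is a countable union of such subspaces. Any $P\notin Z$ has the required property. The main obstacle is the first step: extracting from \cite{BGP} the precise identification of the first-order obstruction for flat extension of $i_*L$ on $A'$ with $\lan P,\ch_2^H(i_*L)\ran$. If the statement of \cite{BGP} does not give this directly, it should be derivable from a standard Atiyah-class computation for a sheaf of the form $i_*L$ supported on a smooth divisor, combined with extending scalars along the pushout from the standard $1$-smooth thickening to $\OO^P_{\le 1}$.
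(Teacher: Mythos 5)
Your proposal is essentially correct and closely mirrors the paper's proof in its skeleton (assume $\dim\Sing(\Theta)<g-4$, use the isomorphisms \eqref{Pic-restr-maps}, Grothendieck--Riemann--Roch, the Lefschetz injectivity of $\cup\,c_1^H(\Theta)$, and the primitivity of $c_1^H(\Theta)$ to derive a contradiction, then take $Z$ to be a countable union of proper linear subspaces indexed by N\'eron--Severi classes). There are two genuine differences worth noting.

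First, the key input from \cite{BGP}. The paper cites \cite[Thm.\ 7]{BGP} verbatim: the obstruction is the class $\eta_{\Th'}[-c_1^H(N')+2c_1^H(L)]\in H^1(\Th',N')$, and the paper then transports it into $H^2(A,\OO_A)$ using two explicit tools: the local cohomology vanishing $\und{H}^i_{\Sing(\Th)}(N)=0$ for $i\le 2$ (giving $H^1(\Th,N)\cong H^1(\Th',N')$) and Lemma~\ref{divisor-class-lem} (identifying the composition $H^1(A,T_A)\to H^1(\Th,N)\to H^2(A,\OO_A)$ with cup product against $c_1^H(\Th)$). You instead assert directly that the pushed-forward obstruction is $\lan P,\ch_2^H(i_*L)\ran\in H^2(A',\OO_{A'})$, and you flag this identification as the main gap. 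The assertion is in fact correct (up to a harmless factor of $2$) and is exactly what the paper proves via the route above, but it does not follow from \cite{BGP} without the intermediate diagram chase; you should supply the analogue of Lemma~\ref{divisor-class-lem} and the local-cohomology step (or the purity statement you invoke for $H^2$ and $H^4$, which plays the same role). This is the one place where your argument is incomplete as written.

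Second, the genericity step. The paper reformulates the condition ``$\la$ lies in the kernel of the composed map $\NS(A)\to H^2(A,\OO)$'' in terms of Hermitian forms, reducing it to the linear-algebra fact that for any nonzero $n\times n$ matrix $Q$ with $n>2$ there is a skew-symmetric $\psi$ with $Q\psi+\psi Q^T\neq 0$. You observe instead that $P\mapsto\lan P,\xi_\ell\ran$ is a linear functional which vanishes identically iff $\xi_\ell=0$, by nondegeneracy of the contraction $\bigwedge^2 H^0(\Om^1_A)^*\times\bigwedge^2 H^0(\Om^1_A)\to\C$. Your version is shorter and more conceptual; both are correct and prove the same statement, since $\xi_\ell\neq 0$ is exactly what Lefschetz and primitivity give. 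One small slip: your union should be indexed by $\ell\in\NS(A)$ (equivalently by $(1,1)$-classes in $H^2(A,\Z)$) rather than by all of $H^2(A,\Z)$, but this does not affect countability.

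In sum: same strategy, a cleaner genericity step, and an acknowledged gap at the BGP-constraint identification that the paper fills with Lemma~\ref{divisor-class-lem} and the local-cohomology isomorphism.
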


\Pf . Assume $\dim\Sing(\Th)<g-4$ and let us show that this leads to a contradiction.
Since any divisor is coisotropic, the morphism
$\eta=\eta_P:\Om^1_A\to T_A$ associated with $P$ induces a morphism
$\eta_{\Th}:\Om^1_{\Th}\to N:=\OO_\Th(\Th)$, so that we have a commutative  diagram
\begin{equation}\label{forms-comm-diag}
\begin{diagram}
\Om^1_A &\rTo{\eta}& T_A\\
\dTo{} &&\dTo{}\\
\Om^1_\Th &\rTo{\eta_\Th}& N
\end{diagram}
\end{equation}
Let us also set $N':=N|_{\Th'}$, $\eta_{\Th'}:=\eta_{\Th}|_{\Th'}$.

We will use the result of Baranovsky-Ginzburg-Pecharich \cite[Thm.\ 7]{BGP} which states
that the existence of the extension in question implies the following equation in $H^1(\Th',N')$:
\begin{equation}\label{BGP-constraint}
\eta_{\Th'} [-c_1^H(N')+2c_1^H(L)]=0,
\end{equation}
where we view first Chern classes as elements in $H^1(\Th',\Om^1_{\Th'})$.
Since $\dim\Sing(\Th)<g-4$, the restriction maps \eqref{Pic-restr-maps} are isomorphisms, so the line bundle $L$ is a restriction of some line bundle $\wt{L}$ on $A$.
Thus, 
$$-c_1^H(N')+2c_1^H(L)=i^*(-c_1^H(\OO_A(\Th)|_{A'})+2c_1^H(\wt{L}|_{A'})),$$
where 
$$i^*:H^1({A'},\Om^1_{A'})\to H^1(\Th',\Om^1_{\Th'})$$
is the natural pull-back map. Using the restriction of the diagram \eqref{forms-comm-diag} to ${A'}$ we 
see that the composition of this pull-back map with the map
$$H^1(\Th',\Om^1_{\Th'})\rTo{\eta_{\Th'}} H^1(\Th',N')$$
is equal to the composition
$$H^1({A'},\Om^1_{A'})\rTo{\eta} H^1({A'},T_{A'})\to H^1(\Th',N').$$
Hence, the equation \eqref{BGP-constraint} means that the composition
\begin{equation}\label{long-composition-eq}
H^1(A,\Om^1_A)\to H^1({A'},\Om^1_{A'})\rTo{\eta|_{A'}} H^1({A'},T_{A'})\to H^1(\Th',N')
\end{equation}
maps the class $-c_1^H(\OO(\Th))+2c_1^H(\wt{L})$ to zero.
Since the class $c_1^H(\Th)\in H^2(A,\Z)$ is primitive, if we prove that for $\eta=\eta_P$ associated with a sufficiently generic  $P$, 
the composition \eqref{long-composition-eq} restricted to the subgroup of Chern classes of line bundles is injective, this will give a contradiction.
 
We can rewrite \eqref{long-composition-eq} as
$$H^1(A,\Om^1_A)\rTo{\eta} H^1(A,T_A)\to H^1(\Th, N)\to H^1(\Th',N').$$
Next we observe that since $\Th$ is Cohen-Macaulay and $\Sing(\Th)$ has codimension $\ge 3$ in $\Th$, 
we have the vanishing of the local cohomology, $\und{H}_{\Sing(\Th)}^i(N)=0$ for $i\le 2$. This
implies that the map $H^1(\Th,N)\to H^1(\Th',N^{reg})$ is an isomorphism.
Thus, it is enough to prove injectivity of the composition
$$\NS(A)\to H^1(A,\Om^1_A)\rTo{\eta} H^1(A,T_A)\to H^1(\Th,N)$$
for sufficiently generic $P$. From the exact sequence
$$0\to \OO_A\to \OO_A(\Th)\to N\to 0$$
we get a connecting homomorphism $H^1(\Th,N)\to H^2(A,\OO_A)$. It is enough to prove injectivity after post-composing with this map.

By Lemma \ref{divisor-class-lem} below, the composition 
$$H^1(A,T_A)\to H^1(\Th,N)\to H^2(A,\OO_A)$$
is given by the product with $c_1^H(\Th)\in H^1(A,\Om^1_A)$. Therefore, we are reduced to checking
injectivity of the composed map
\begin{equation}\label{NS-to-H2-map}
\NS(A)\to H^1(A,\Om^1_A)\rTo{\eta} H^1(A,T_A)\rTo{\cup c_1^H(\Th)} H^2(A,\OO)
\end{equation}
for sufficiently generic $P$. 
Note that the last arrow can be written as the composition
$$H^1(A,T_A)=T_{A,0}\ot H^1(A,\OO)\rTo{\phi_\Th\ot \id} 
T_{\hat{A},0}\ot H^1(A,\OO)=H^1(A,\OO)^{\ot 2}\to 
{\bigwedge}^2 H^1(A,\OO)=H^2(A,\OO),$$
where $\phi_\Th: T_{A,0}\to T_{\hat{A},0}$ is the tangent map to the principal polarization isomorphism
$A\to\hat{A}$ associated with $\Th$. Thus, a class $\la\in\NS(A)$ is in the kernel of \eqref{NS-to-H2-map} 
if and only if 
\begin{equation}\label{eta-lambda-in}
\eta(\la)\in (\phi_\Th\ot\id)^{-1}(S^2H^1(A,\OO))\sub T_{A,0}\ot H^1(A,\OO).
\end{equation}
Since $\NS(A)$ is a finitely generated abelian group, it remains to check that for every nonzero 
$\la\in\NS(A)$ the condition \eqref{eta-lambda-in} defines a proper linear subspace of $\eta$'s, where we think of
$\eta$ as a skew-symmetric map $T^*_{A,0}\to T_{A,0}$.

Set $V:=T_{A,0}$.
Recall that there is a natural identification $H^1(A,\OO)=\bar{V}^*$ (the dual of the complex conjugate
vector space to $V$), so that with every $\la\in\NS(A)$ the corresponding element
$$c_1(\la)\in H^1(A,\Om^1_A)=V^*\ot H^1(A,\OO)=V^*\ot\bar{V}^*$$
is represented by a Hermitian form $H=H_\la$ associated with $\la$, that is $H(v_1,v_2)$ is $\C$-linear in 
$v_1$, $\C$-antilinear in $v_2$ and satisfies $H(v_2,v_1)=H(v_1,v_2)$.
Let $H_0(v_1,v_2)$ be the Hermitian form associated with the class of $\Theta$. Then we can write
any other Hermitian form uniquely as $H(v_1,v_2)=H_0(Qv_1,v_2)$, for a $\C$-linear operator $Q:V\to V$ which is 
self-adjoint with respect to $H_0$. Also, we can use the identification $\bar{V}\to V^*$ given by $H_0$ to 
view the operator $\eta:V^*\to V$ as a ($\C$-linear) operator $\psi:\bar{V}\to V$. Then the
condition \eqref{eta-lambda-in} is equivalent to the condition that the form $H(\psi v_1,v_2)$ is symmetric
(where $H=H_\la$). In terms of the operator $Q$ this condition becomes
$Q\psi=-\psi\ov{Q}$. If we choose a $\C$-basis of $V$, orthonormal with respect to $H_0$ then the matrices
corresponding to $Q$ and $\psi$ satisfy $\ov{Q}=Q^T$ and $\psi^T=-\psi$. Thus, the condition
\eqref{eta-lambda-in} is equivalent to $Q\psi=-\psi Q^T$. It remains to
observe that for every nonzero matrix $n\times n$-matrix $Q$ (where $n>2$), there exists a skew-symmetric matrix $\psi$ 
such that $Q\psi+\psi Q^T\neq 0$ (this can be checked by taking $\psi$ of the form $E_{ij}-E_{ji}$
where $E_{ij}$ are elementary matrices).
\ed

\begin{lem}\label{divisor-class-lem} 
Let $D\sub X$ be an effective divisor in a smooth variety. Then the composition 
$$H^1(X,\Om^1_X)\to H^1(X,\OO_D(D))\to H^2(X,\OO),$$
where the second map comes from the exact sequence $0\to \OO_X\to \OO_X(D)\to \OO_D(D)\to 0$,
is equal to the product with the class $c_1^H(D)\in H^1(X,\Om^1_X)$.
\end{lem}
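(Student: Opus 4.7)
The plan is a direct Cech cocycle computation that identifies the connecting homomorphism with a cup product. I would start by choosing an affine open cover $\{U_i\}$ of $X$ on which $D$ is principal, cut out by $f_i \in \OO_X(U_i)$. The local sections $e_i := 1/f_i$ then trivialize $\OO_X(D)|_{U_i}$ with transition functions $g_{ij} = f_j/f_i$, so $c_1^H(D) \in H^1(X, \Omega^1_X)$ is represented by the Cech cocycle $\{d\log g_{ij}\}$, and the natural sheaf map $T_X \to \OO_D(D) = N_{D/X}$ (restriction to $D$ composed with the projection $T_X|_D \to N_{D/X}$) is locally $v \mapsto v(f_i)\cdot e_i|_D$.

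Next, given a Cech cocycle $\{\xi_{ij}\}$ for a class $\xi \in H^1(X, T_X)$, I would push it forward to the cocycle $\sigma_{ij} := \xi_{ij}(f_i)\cdot e_i|_D$ representing its image in $H^1(X, \OO_D(D))$. To apply the connecting homomorphism for $0 \to \OO_X \to \OO_X(D) \to \OO_D(D) \to 0$, I would lift $\sigma_{ij}$ to the section $\tilde\sigma_{ij} := \xi_{ij}(f_i)/f_i$ of $\OO_X(D)$ on $U_{ij}$ and then compute $(\delta\tilde\sigma)_{ijk} = \tilde\sigma_{jk} - \tilde\sigma_{ik} + \tilde\sigma_{ij}$. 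Using the cocycle identity $\xi_{ik} = \xi_{ij}+\xi_{jk}$ together with the Leibniz rule applied to $f_j = g_{ij} f_i$, this reduces after a short calculation to $\xi_{jk}(g_{ij})/g_{ij} = \langle \xi_{jk}, d\log g_{ij}\rangle$, which is regular on $U_{ijk}$ and represents the image in $H^2(X, \OO_X)$.

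Finally, I would observe that the resulting 2-cocycle $\{\langle \xi_{jk}, d\log g_{ij}\rangle\}$ is, up to the standard sign coming from graded-commutativity of degree-one Cech classes, precisely the Alexander--Whitney representative of the cup product of $\xi$ with $c_1^H(D) = \{d\log g_{ij}\}$ via the contraction pairing $T_X \otimes \Omega^1_X \to \OO_X$; this identifies the composition with multiplication by $c_1^H(D)$, as claimed. The main (and essentially only) obstacle is the bookkeeping of signs in the chosen Cech cup product convention; all the other steps are local and routine.
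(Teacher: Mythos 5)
Your proof is correct, but it takes a genuinely different route from the paper's. (You also silently and correctly fix what looks like a typo in the statement: the source of the composition should be $H^1(X,T_X)$, not $H^1(X,\Om^1_X)$, as one sees from the application in the proof of Proposition~\ref{1st-order-def-qu-prop}; your cocycle $\{\xi_{ij}\}$ lives in $T_X$.) The paper argues conceptually: it maps the Atiyah extension
$0\to\OO_X\to D_{\le 1}(\OO_X(D))\to T_X\to 0$
(the sheaf of first-order differential operators on $\OO_X(D)$) to the divisor exact sequence $0\to\OO_X\to\OO_X(D)\to\OO_D(D)\to 0$ via the evaluation map $P\mapsto P\cdot 1$, making the left square commute with identities. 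Since the extension class of the Atiyah sequence is $c_1^H(D)$, the connecting map $H^1(T_X)\to H^2(\OO_X)$ for the bottom row is cup product with $c_1^H(D)$, and one is done. Your approach instead unwinds the connecting homomorphism explicitly with a Čech lift $\tilde\sigma_{ij}=\xi_{ij}(f_i)/f_i$; the coboundary computation you sketch indeed collapses, using $\xi_{ik}=\xi_{ij}+\xi_{jk}$ and $f_j=g_{ij}f_i$, to $\xi_{jk}(g_{ij})/g_{ij}=\langle\xi_{jk},d\log g_{ij}\rangle$, which is the Alexander--Whitney cup product (up to the sign permutation of degree-one factors, which is harmless here). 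The paper's proof is shorter and reuses a standard fact; yours is self-contained and makes the $c_1^H$ visible at the cocycle level. Both are complete arguments.
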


\Pf . We have a commutative diagram
\begin{diagram}
0\to &\OO_X &\rTo{}& D_{\le 1}(\OO_X(D))&\rTo{}& T_X&\to 0\\
&\dTo{\id}&&\dTo{}&&\dTo{}\\
0\to &\OO_X &\rTo{}& \OO(D)&\rTo{}& \OO_D(D)&\to 0\\
\end{diagram}
where $D_{\le 1}(\OO_X(D))$ is the sheaf of differential operators $\OO_X(D)\to \OO_X(D)$ of order $\le 1$,
and the middle vertical arrow sends an operator $D$ to $D\cdot 1$, where $1\in\OO_X\sub\OO_X(D)$.
It remains to use the fact that the class of the extension given by the first row of the diagram is
exactly $c_1^H(D)$.
\ed

\begin{rem} There is a homomorphism $\OO^{NC}_{\le 1}\to\OO^P_{\le 1}$
from the standard 1-smooth thickening 
to the 1st order deformation associated with any bivector $P$. If we start with a module of finite projective dimension over $\OO^{NC}_{\le 1}$ by extending scalars
we get a module $M$ of finite projective dimension
over $\OO^P_{\le 1}$. We claim that any such module is automatically flat
over $k[h]/h^2$. Indeed, we have to check that the infinite periodic complex 
$$\ldots\to M\rTo{h} M\rTo{h}M\to\ldots$$
is exact, and this follows immediately from the fact that $M$ has a finite resolution by modules for which this 
exactness holds.
Thus, from Proposition \ref{1st-order-def-qu-prop} we get another proof of Corollary \ref{ch-2-cor} for $\FF$ of the form $j^{ns}_*L$ (as in
Theorem \ref{Jac-thm}).
\end{rem}




\subsection{Example with the standard $1$-smooth thickening of an abelian surface}
\label{surface-ex-sec}

For every abelian surface $A$ we will give an example of an object $E\in D^b(A)$ such that
the Fourier transform of $E$ has full support but $E$ extends to the standard 1-smooth thickening of $A$.
This shows that in Conjecture B it is not to sufficient to look at the first order NC-thickening.

First, we observe that trivializing $\om_A$, we can view the standard $\om_A$-valued bracket $\{f,g\}=df\we dg$ as a Poisson bracket $\Pi$ on $\OO_A$.
Hence, we have the identification of the sheaf $\OO_A^{1-NC}$
with the $1$st order deformation of $\OO_A$ associated with $\Pi$.

\begin{lem}\label{H2-lem} 
Let $\a\in H^2(A,\OO_A)$ be a nonzero element. 
Let $F:=\Cone(\a:\OO_A\to \OO_A[2])$. Then the morphism $\a:F\to F[2]$ is zero in $D^b(A)$.
\end{lem}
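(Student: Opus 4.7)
The plan is to produce an explicit chain-level null-homotopy for $\alpha \colon F \to F[2]$, exploiting the fact that $\alpha \cup \alpha$ vanishes strictly (not merely up to coboundary) on an abelian surface. First I would resolve $\OO_A$ by the Dolbeault complex $(\mathcal{A}^{0,\bullet}, \bar\partial)$ of fine sheaves of smooth $(0,q)$-forms on $A$, and represent $\alpha \in H^2(A, \OO_A)$ by a $\bar\partial$-closed global form $\omega \in \Gamma(A, \mathcal{A}^{0,2})$. The assumption $\dim_{\C} A = 2$ enters crucially here: it forces $\mathcal{A}^{0,k} = 0$ for $k > 2$, so $\omega \wedge \omega = 0$ on the nose. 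At the chain level, $\alpha \colon \OO_A \to \OO_A[2]$ is then represented by the chain map $m_\omega := \omega \wedge (-)$, and $F = \Cone(\alpha)$ by $\Cone(m_\omega)$: in degree $k$ this is $\mathcal{A}^{0,k+2} \oplus \mathcal{A}^{0,k+1}$ with differential $d(y,x) = (\bar\partial y + \omega \wedge x,\, -\bar\partial x)$.

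The intrinsic derived action $\alpha \colon F \to F[2]$ is then realized on this model by componentwise wedging with $\omega$, $(y,x) \mapsto (\omega \wedge y,\, \omega \wedge x)$; this is a chain map precisely because $\omega \wedge \omega = 0$, and it agrees with the derived action by functoriality of the mapping cone applied to the on-the-nose commuting square $m_\omega \circ m_\omega = 0$. I would conclude by defining the homotopy $h \colon F \to F[1]$ of degree $+1$ via $h(y,x) := (0, y)$ and computing directly
\[
dh(y,x) + hd(y,x) = (\omega \wedge y,\, -\bar\partial y) + (0,\, \bar\partial y + \omega \wedge x) = (\omega \wedge y,\, \omega \wedge x),
\]
so that the chain map representing $\alpha$ is null-homotopic, and therefore $\alpha \colon F \to F[2]$ is zero in $D^b(A)$. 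The one non-routine point is identifying ``multiply by $\omega$'' on $\Cone(m_\omega)$ with the intrinsic derived action of $\alpha$; once that is granted, the rest is the short calculation above, with the dimension hypothesis $\dim A = 2$ being the key geometric input that makes the homotopy close up.
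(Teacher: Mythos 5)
Your proof is correct and isolates the same mechanism the paper uses, but implements it differently. The common key point is that $\dim A=2$ forces $\a^2=0$ \emph{strictly}, so that the degree-two self-map of the cone admits a genuine chain-level null-homotopy rather than merely a cohomological vanishing. The paper's proof works in the category of twisted complexes over a minimal $A_\infty$-model for $D^b(A)$: it represents $F$ by $[\OO_A\to\OO_A[1]]$, notes that a closed degree-two self-map of this twisted complex has no off-diagonal component because $\Ext^3(\OO_A,\OO_A)=H^3(A,\OO_A)=0$, and then contracts the resulting diagonal map $(\a,\a)$ with the obvious degree-one homotopy built from the identity of $\OO_A$. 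Your version replaces that abstract model with the explicit Dolbeault dg-model, where the two vanishings needed (``$\a^2=0$'' and ``no $\Ext^3$'') become a single manifest fact, $\mathcal{A}^{0,k}=0$ for $k>2$, and the homotopy $h(y,x)=(0,y)$ can be verified by inspection. This is a slightly more concrete, self-contained route and would serve as a fine substitute; the paper's formulation is terser and dovetails with the twisted-complex machinery used elsewhere. One small suggestion: the identification of ``componentwise wedging with $\omega$'' with the intrinsic derived action of $\a$ is more cleanly justified by observing that $\Cone(m_\omega)$ is an $\mathcal{A}^{0,\bullet}$-dg-module on which $\omega$ acts componentwise, and $\a$ acts on any object of $D^b(A)$ via this module structure, rather than by appealing to ``functoriality of the mapping cone'' (which is only canonical at the strict chain level, which is indeed what you mean, but is worth saying precisely).
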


\begin{proof}
Working with a minimal $A_\infty$-category corresponding to $D^b(A)$, we can represent $F$ by the twisted complex $[\OO_A\rTo{\a}\OO_A[1]]$
(where the differential has degree $1$). Then the morphism $\a$ is represented by an obvious chain map 
$$[\OO_A\rTo{\a}\OO_A[1]]\to [\OO_A[2]\rTo{\a}\OO_A[3]]$$
with the components given by $\a$ (there can be no nontrivial component $\OO_A\to \OO_A[3]$). But this chain map is homotopic to zero,
which implies the assertion.
\end{proof}


Our proof will use the extension of Fourier-Mukai transform to the equivalence between derived categories of the first order deformations of $D^b(A)$
constructed in \cite{Toda} (it has been extended to certain higher order deformations in \cite{BBP}).

\begin{prop} Let $e\in A$ be the neutral element, and let $u\in \Ext^2_A(\OO_e,\OO_e)$ denote a nonzero element.
Then the object $G:=\Cone(u:\OO_e\to \OO_e[2])$ of $D^b(A)$ extends to a perfect complex in $D^b(A^{1-NC})$.
\end{prop}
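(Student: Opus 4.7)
The plan is to transfer the problem along the Fourier--Mukai transform to the dual abelian surface $\hat A$, where the deformation $A^{1-NC}$ corresponds to a purely gerby first-order deformation, and then apply Lemma \ref{H2-lem} to show the obstruction to extending vanishes.

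More concretely, first I would apply the Fourier--Mukai transform $F:D^b(A)\to D^b(\hat A)$, normalized so that $F(\OO_e)\simeq\OO_{\hat A}$. Since $F$ is an equivalence, $\beta:=F(u)$ is a nonzero element of $\Ext^2_{\hat A}(\OO_{\hat A},\OO_{\hat A})=H^2(\hat A,\OO_{\hat A})$, and
\[
F(G)\simeq \Cone\bigl(\beta:\OO_{\hat A}\to \OO_{\hat A}[2]\bigr).
\]
By the main result of \cite{Toda}, $F$ extends to an equivalence between the derived category of $A^{1-NC}$ and the derived category of the first-order deformation of $\hat A$ whose $HH^2$-class is the FM-image of the deformation class of $A^{1-NC}$. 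The latter class is the nonzero Poisson bivector $\Pi\in H^0(A,\We^2 T_A)\subset HH^2(A)$ coming from the trivialization of $\om_A$, and for abelian surfaces the FM isomorphism $HH^2(A)\simeq HH^2(\hat A)$ interchanges the HKR summands $H^0(A,\We^2 T_A)$ and $H^2(\hat A,\OO_{\hat A})$ (both one-dimensional). Hence the FM-dual of $A^{1-NC}$ is the gerby first-order deformation of $\hat A$ classified by a nonzero class $\alpha\in H^2(\hat A,\OO_{\hat A})$.

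Next I would exploit the fact that $H^2(\hat A,\OO_{\hat A})$ is one-dimensional, so $\alpha$ and $\beta$ are proportional nonzero classes. The obstruction to extending $F(G)$ to the gerby deformation of $\hat A$ given by $\alpha$ is the cup-product morphism $\alpha\cdot\id_{F(G)}:F(G)\to F(G)[2]$. Lemma \ref{H2-lem}, applied with the nonzero class $\beta$ and the object $F(G)=\Cone(\beta)$, shows that $\beta:F(G)\to F(G)[2]$ vanishes in $D^b(\hat A)$, and by proportionality so does $\alpha:F(G)\to F(G)[2]$. Thus $F(G)$ extends to a perfect complex on the deformed $\hat A$, and Toda's equivalence then produces the desired extension of $G$ to a perfect complex on $A^{1-NC}$.

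The main obstacle I anticipate is the identification in step 2: one must verify that under Toda's Fourier--Mukai-type equivalence the deformation class of $A^{1-NC}$ is sent to a nonzero class lying purely in the gerby summand $H^2(\hat A,\OO_{\hat A})$ of $HH^2(\hat A)$. This reduces to the interchange of HKR summands $H^0(\We^2 T)\leftrightarrow H^2(\OO)$ for abelian varieties under FM, which can be checked either by a direct computation with the Poincar\'e kernel or by extracting the corresponding compatibility from the constructions in \cite{Toda}.
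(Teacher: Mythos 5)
Your proposal follows essentially the same route as the paper's proof: apply Toda's Fourier–Mukai equivalence between first-order deformations to reduce the statement to extending $F(G)=\Cone(\beta:\OO_{\hat A}\to\OO_{\hat A}[2])$ across a gerby deformation of $\hat A$, note that by \cite[Prop.~6.1]{Toda} the obstruction is the cup product with the gerby class, and kill that obstruction using Lemma~\ref{H2-lem} together with the one-dimensionality of $H^2(\hat A,\OO_{\hat A})$. You simply fill in more of the details (the HKR-summand exchange, the proportionality of $\alpha$ and $\beta$) that the paper leaves to the citations of \cite[Thm.~1.1]{Toda} and \cite[Prop.~6.1]{Toda}.
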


\begin{proof}
By the result of Toda, \cite[Thm.\ 1.1]{Toda}, it is enough to prove that the Fourier transform $F$ of $G$ extends to a perfect object of the derived category $D^b(\Coh^\a(A))$,
where $\Coh^\a(A)$ is the first order deformation of $\Coh(A)$ associated with a nonzero class $\a\in H^2(A,\OO_A)$. Furthermore, by \cite[Prop.\ 6.1]{Toda}, it is enough
to check that the morphism $\a:F\to F[2]$ vanishes. But this is proved in Lemma \ref{H2-lem}.
\end{proof}

\end{document}